\numberwithin{equation}{section}
\providecommand{\keywords}[1]
{
  \small	
  \textbf{\textit{Keywords:}} #1
}
\newcommand{\R}{\mathbb{R}}
\renewcommand{\P}{\mathbb{P}}
\newcommand{\E}{\mathbb{E}}
\newcommand{\cB}{\mathcal{B}}
\newcommand{\cU}{\mathcal{U}}
\newcommand{\cX}{\mathcal{X}}
\newtheorem{Theorem}{Theorem}[section]
\numberwithin{Theorem}{section}
\newtheorem{Proposition}[Theorem]{Proposition}
\newtheorem{Corollary}[Theorem]{Corollary}
\newtheorem{Lemma}[Theorem]{Lemma}
\newtheorem{Remark}[Theorem]{Remark}
\newtheorem{Definition}[Theorem]{Definition}
\title{Wasserstein distance in terms of the comonotonicity Copula}
\author[1]{Mariem Abdellatif}
\author[2]{Peter Kuchling}
\author[3]{Barbara R\"udiger}
\author[4]{Irene Ventura}
\affil[1]{University of Wuppertal, Germany\\ \texttt{abdellatif@uni-wuppertal.de}}
\affil[2]{Bielefeld University of Applied Sciences and Arts, Germany\\ \texttt{peter.kuchling@hsbi.de}}
\affil[3]{University of Wuppertal, Germany\\ \texttt{ruediger@uni-wuppertal.de}}
\affil[4]{University of Wuppertal, Germany\\ \texttt{ventura@uni-wuppertal.de}}
\date{} 
\begin{document}

\maketitle

\begin{abstract}
The aim of this article is to  write the $p$-Wasserstein metric $W_p$  with the $p$-norm, $p\in [1,\infty)$, on $\R^d$ in terms of copula. In particular for the case of one-dimensional distributions,  we get  that the copula employed to get the optimal coupling of the Wasserstein distances is the comotonicity copula. We obtain the equivalent result also for $d$-dimensional distributions  under the  sufficient and necessary condition that these have the same dependence structure of their one-dimensional marginals, i.e that the $d$-dimensional distributions share the same copula. Assuming $p\neq q$, $p,q$ $\in [1,\infty)$ and that the probability measures $\mu$ and $\nu$ are sharing the same copula,  we also analyze the Wasserstein distance $W_{p,q}$ discussed in \cite{Alfonsi}  and  get an upper and lower bounds of  $W_{p,q}$ in terms of $W_p$, written in terms of comonotonicity copula. We show that as a consequence the lower and upper bound of $W_{p,q}$ can be written in terms of  generalized inverse functions.
\end{abstract}

\keywords{Copula, Wasserstein distance, Comonotonicity}

\allowdisplaybreaks

\section{General introduction}
Wasserstein distances or Kantorovich–Rubinstein distances have been introduced first by Leonid Kantorovich in $1939$ \cite{MR129016} and they are used in many areas of pure and applied mathematics. The concept of Wasserstein distance is motivated by the concept of optimal transportation and  is based on finding an appropriate coupling between two marginal probability measures. Indeed, the optimal transport cost between two probability measures $\mu$ and $\nu$ on the product of two Polish probability spaces $\mathcal{X}\times\mathcal{Y}$ is defined by 
\begin{align}\label{optimal transport cost}
    C(\mu,\nu)=\inf_{\pi \in \Pi(\mu,\nu)}\int_{\mathcal{X}\times\mathcal{Y}} c(x,y) \pi(dx,dy),
\end{align}
where $c(x,y)$ is the cost for transporting one unit of mass from $x$ to $y$, $\Pi(\mu,\nu)$ is the set of all couplings between $\mu$ and $\nu$, i.e. the set of all probability measures on $\mathcal{X}\times\mathcal{Y}$ with margins $\mu$ and $\nu$. Moreover, when the two Polish probability spaces coincide $\mathcal{X}=\mathcal{Y}$, and the cost is defined in terms of a distance $d$ on $\mathcal{X}$, then one can prove that \eqref{optimal transport cost} actually defines a distance, see e.g. \cite{villani_topics}. 

In fact, let $\mu,\nu$ be two probability measures on a Polish space $(\mathcal{X},d)$, then the Wasserstein distance $ W_{p}$  of order $p\in [1,\infty)$ is defined by the following formula:
\begin{align}\label{Wm}
    (W_{p}(\mu,\nu))^{p}=\inf_{\pi \in \Pi(\mu,\nu)} \int_{\mathcal{X}\times\mathcal{X}} d(x,y)^{p} \pi(dx, dy).
\end{align}
For two $\mathcal{X}$-valued random variables $X$ and $Y$ with $X\sim\mu$ and $Y\sim\nu$, we set $W_p(X,Y)=W_p(\mu,\nu)$.
The Wasserstein space of order $p$ is the space of probability measures which have a finite moment of order $p$ and is defined as 
\begin{align*}
     P_{p}(\mathcal{X}):=\{\mu \in P(\mathcal{X}); \int_{\mathcal{X}} d(x_{0},x)^{p} \mu(dx)<\infty\},
\end{align*}
where $x_{0} \in \mathcal{X}$ is arbitrary and $P(\mathcal{X})$ is the set of all probability measures on $\mathcal{X}$. It turns out that for any $p\in[1,\infty)$, $W_p$ defines a metric on $P_p(\cX)$ \cite[Theorem 7.3]{villani_topics}. Furthermore, the Wasserstein space $P_{p}(\mathcal{X}$) metrized by the Wasserstein distance $W_p$ is also a Polish space  \cite[Theorem 6.18]{Villani}.

Hence, the Wasserstein distance provides a meaningful distance between distributions.  Furthermore, it is related to the notion of weak convergence of measures (see e.g. \cite[Section 6]{Villani}) and it has various applications in stochastic analysis, especially in ergodicity theory, see for example 
\cite{Friesen,MR4241464,MR4153590,arxiv.2301.05640,arxiv.2301.05120} for applications.
Remark that the following theorem guarantees that the infimum  in \eqref{Wm} is reached by some optimal coupling.

\begin{Theorem}[{Existence of optimal coupling, \cite[Theorem 4.1]{Villani}}]\label{Existence of optimal coupling}
Let $(\cX,d)$ be a Polish space and $\mu,\nu \in P_{p}(\cX)$. Then there exists an optimal coupling $\hat{\pi} \in \Pi(\mu,\nu)$ such that
\begin{align}\label{infC}
   (W_{p}(\mu,\nu))^{p}&= \int_{\cX\times\cX} d(x,y)^{p} \hat{\pi}(dx, dy).
\end{align}
\end{Theorem}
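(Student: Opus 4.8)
The plan is to use the direct method of the calculus of variations: build a minimizing sequence of couplings, extract a weakly convergent subsequence by a compactness argument, and close the loop using lower semicontinuity of the cost functional. The first point is that $\Pi(\mu,\nu)$ is nonempty, since it contains the product coupling $\mu\otimes\nu$, and that the infimum in \eqref{Wm} is finite, since $\mu,\nu\in P_p(\cX)$ forces $\int_{\cX\times\cX} d(x,y)^p\,(\mu\otimes\nu)(dx,dy)<\infty$ by the triangle inequality for $d$. The crucial structural fact is that $\Pi(\mu,\nu)$ is tight: a single Borel probability measure on a Polish space is tight, so given $\e>0$ I would pick compacts $K,L\subset\cX$ with $\mu(\cX\setminus K)<\e/2$ and $\nu(\cX\setminus L)<\e/2$; then for \emph{every} $\pi\in\Pi(\mu,\nu)$ one has $\pi\big((\cX\times\cX)\setminus(K\times L)\big)\le\mu(\cX\setminus K)+\nu(\cX\setminus L)<\e$, and $K\times L$ is compact in $\cX\times\cX$. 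By Prokhorov's theorem, $\Pi(\mu,\nu)$ is therefore relatively compact for the topology of weak convergence.

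Next I would verify that $\Pi(\mu,\nu)$ is closed under weak convergence: if $\pi_n\rightharpoonup\pi$ with each $\pi_n\in\Pi(\mu,\nu)$, then testing against functions $(x,y)\mapsto f(x)$ and $(x,y)\mapsto g(y)$ for $f,g\in C_b(\cX)$ shows that the two marginals of $\pi$ are again $\mu$ and $\nu$, so $\pi\in\Pi(\mu,\nu)$; combined with the previous step, $\Pi(\mu,\nu)$ is weakly compact. Then I would show that the functional $\pi\mapsto\int_{\cX\times\cX} d(x,y)^p\,\pi(dx,dy)$ is lower semicontinuous for weak convergence. Since $(x,y)\mapsto d(x,y)^p$ is nonnegative and continuous, it is the increasing pointwise limit of the bounded continuous functions $c_N(x,y):=\min\{d(x,y)^p,N\}$; for each fixed $N$ and any weakly convergent $\pi_n\rightharpoonup\pi$ we get $\int c_N\,d\pi=\lim_n\int c_N\,d\pi_n\le\liminf_n\int d(x,y)^p\,d\pi_n$, and letting $N\to\infty$ with monotone convergence yields $\int d(x,y)^p\,d\pi\le\liminf_n\int d(x,y)^p\,d\pi_n$.

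Finally, pick a minimizing sequence $(\pi_n)_{n\in\N}\subset\Pi(\mu,\nu)$ with $\int d(x,y)^p\,\pi_n(dx,dy)\to (W_p(\mu,\nu))^p$, extract a weakly convergent subsequence $\pi_{n_k}\rightharpoonup\hat\pi$ with $\hat\pi\in\Pi(\mu,\nu)$ by weak compactness, and conclude from lower semicontinuity that $\int d(x,y)^p\,\hat\pi(dx,dy)\le (W_p(\mu,\nu))^p$; the reverse inequality is immediate because $\hat\pi$ is an admissible coupling, hence \eqref{infC} holds. I expect the main obstacle to be not any single step but assembling the topological ingredients in the right order, specifically establishing tightness \emph{uniformly} over all of $\Pi(\mu,\nu)$ (which is exactly what makes Prokhorov applicable to the whole constraint set) and handling the possibly unbounded, though nonnegative, cost $d^p$ through the truncation argument for lower semicontinuity; the remaining verifications are routine manipulations with the weak topology on $P(\cX\times\cX)$.
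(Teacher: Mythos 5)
Your proof is correct and complete: the paper does not prove this statement itself but cites it from Villani, and your argument (uniform tightness of $\Pi(\mu,\nu)$ via Ulam's theorem and the marginal constraint, Prokhorov, closedness of the constraint set under weak limits, and lower semicontinuity of the cost through truncation and monotone convergence) is precisely the standard direct-method proof given in the cited source. No gaps.
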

We should hence expect that at least in some cases, this infimum can be identified by writing the couplings in terms of ``copulas''. This seems to us natural, as the Wasserstein distance itself is defined by minimizing over all couplings between two marginal distributions, while the copula identifies the exact dependence between the marginal distributions of a coupling.

Abe Sklar introduces the word 'copula' in the statistical literature in 1959 \cite{MR0125600,MR1485519}. to define functions capable of linking a multidimensional distribution to its marginal distributions. Functions with these properties had already been used before by authors such as Giorgio Dall'Aglio \cite{DALL’AGLIO} and Maurice Frechèt \cite{Frechet}, without giving them a name. Giorgio Dall’Aglio \cite[Theorem X]{DALL’AGLIO} was indirectly the first to link that the $p$-Wasserstein distance for $d=1$ could be written in terms of the comonotonicity copula $M$, however  without being aware of the concept of copulas or Wasserstein distance. This Theorem  \cite[Theorem X]{DALL’AGLIO} of Dall'Aglio is reported here in Theorem \ref{tails} in section \ref{sec:main}.

Nowadays, copulas are an important tools in actuarial science, since these functions are able through the marginals to clarify the dependence structure of risks \cite{denuit2005actuarial,MR1968943,MR2244349,MR3445371}.

 In \cite{vallander_ru}, Vallander established the following link between the $1$-Wasserstein distance with the Euclidean distance and the generalized inverse   of one-dimensional distribution functions. 
\begin{Theorem}\label{ThVallender}
    Let $\mu, \nu$ be two probability measures in $(\R,\mathcal{B}(\R))$.
Let $F$ and $G$ be the associated distribution functions.
   \begin{align}
    W_{1}(\mu,\nu) &= \int_{[0,1]} |F^{-1}(u)-G^{-1}(u)| du, \end{align} 
where $F^{-1}$ and $G^{-1}$ are the generalized inverses associated to $F$ and $G$, respectively.
\end{Theorem}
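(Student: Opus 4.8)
The plan is to sandwich $W_1(\mu,\nu)$ between two copies of the number $\int_{[0,1]}|F^{-1}(u)-G^{-1}(u)|\,\dm u$: an upper bound coming from a single explicit coupling, and a lower bound valid for every coupling. For the upper bound I would use the \emph{comonotonic} coupling: take $U$ uniformly distributed on $[0,1]$ and set $X:=F^{-1}(U)$, $Y:=G^{-1}(U)$. The defining equivalence of the generalized inverse, $F^{-1}(u)\le t \iff u\le F(t)$ (and the analogous statement for $G$), gives $\P(X\le t)=\P(U\le F(t))=F(t)$, so $X\sim\mu$, and likewise $Y\sim\nu$; hence the law $\hat\pi$ of $(X,Y)$ belongs to $\Pi(\mu,\nu)$. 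Using $\hat\pi$ as a competitor in \eqref{Wm} with $d(x,y)=|x-y|$ yields
\begin{align*}
  W_1(\mu,\nu)\;\le\;\int_{\R\times\R}|x-y|\,\hat\pi(\dm x,\dm y)\;=\;\int_{[0,1]}\big|F^{-1}(u)-G^{-1}(u)\big|\,\dm u .
\end{align*}

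For the lower bound I would exploit the elementary identity $|x-y|=\int_{\R}\big|\1_{(-\infty,t]}(x)-\1_{(-\infty,t]}(y)\big|\,\dm t$, valid for all $x,y\in\R$ because the integrand is the indicator of the interval with endpoints $x$ and $y$. For an arbitrary $\pi\in\Pi(\mu,\nu)$, Tonelli's theorem and the triangle inequality for integrals give
\begin{align*}
  \int_{\R\times\R}|x-y|\,\pi(\dm x,\dm y)
  &=\int_{\R}\Big(\int_{\R\times\R}\big|\1_{\{x\le t\}}-\1_{\{y\le t\}}\big|\,\pi(\dm x,\dm y)\Big)\,\dm t \\
  &\ge\int_{\R}\Big|\int_{\R\times\R}\big(\1_{\{x\le t\}}-\1_{\{y\le t\}}\big)\,\pi(\dm x,\dm y)\Big|\,\dm t
   \;=\;\int_{\R}|F(t)-G(t)|\,\dm t ,
\end{align*}
the last equality holding because $\pi$ has marginals $\mu$ and $\nu$. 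Taking the infimum over $\pi$ gives $W_1(\mu,\nu)\ge\int_{\R}|F(t)-G(t)|\,\dm t$.

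It then remains to check that $\int_{\R}|F(t)-G(t)|\,\dm t=\int_{[0,1]}|F^{-1}(u)-G^{-1}(u)|\,\dm u$: geometrically, both are the planar Lebesgue measure of the region between the graphs of $F$ and $G$, equivalently (after reflecting in the diagonal) between those of $F^{-1}$ and $G^{-1}$. Concretely I would write $|F(t)-G(t)|=\int_{[0,1]}\big(\1_{\{F(t)<u\le G(t)\}}+\1_{\{G(t)<u\le F(t)\}}\big)\,\dm u$, interchange the $t$- and $u$-integrals by Tonelli, and use $F(t)<u\iff t<F^{-1}(u)$ and $u\le F(t)\iff F^{-1}(u)\le t$ to evaluate the inner $t$-integral as $(F^{-1}(u)-G^{-1}(u))^{+}+(G^{-1}(u)-F^{-1}(u))^{+}=|F^{-1}(u)-G^{-1}(u)|$. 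Chaining the three steps gives
\begin{align*}
  \int_{[0,1]}|F^{-1}(u)-G^{-1}(u)|\,\dm u
  &=\int_{\R}|F(t)-G(t)|\,\dm t\;\le\;W_1(\mu,\nu)\\
  &\le\int_{[0,1]}|F^{-1}(u)-G^{-1}(u)|\,\dm u ,
\end{align*}
so all inequalities are equalities, which is the claim. (All displayed quantities may a priori equal $+\infty$; this does not affect the reasoning.) The main obstacle is the lower bound --- showing that no coupling beats the comonotonic one --- for which the layer-cake representation of $|x-y|$ together with Fubini--Tonelli and the triangle inequality is exactly the right tool; the remaining effort is careful bookkeeping with the generalized inverses and their $\le$ versus $<$ conventions.
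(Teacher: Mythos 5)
Your proof is correct, and it is a genuinely different (and more self-contained) route than the one the paper takes. The paper does not prove Theorem \ref{ThVallender} directly: it cites Vallander and then re-derives the identity as Corollary \ref{CorVallender}, by first proving Theorem \ref{main theorem} (invoking the existence of an optimal coupling, the identity $\E|X-Y|=\int_\R[F(y)+G(y)-2\P(X\le y,Y\le y)]\,dy$ quoted from Vallander, Sklar's theorem, and the Fr\'echet--Hoeffding upper bound $C\le M$ to show the comonotonic coupling is optimal) and then applying Corollary \ref{comonotonicity result} to pass to quantile functions. You instead sandwich $W_1$ directly: the comonotonic coupling gives the upper bound as an explicit competitor, and the layer-cake identity $|x-y|=\int_\R|\1_{\{x\le t\}}-\1_{\{y\le t\}}|\,dt$ plus the triangle inequality gives the lower bound $\int_\R|F-G|$ for \emph{every} coupling, after which a Tonelli computation identifies $\int_\R|F-G|\,dt$ with $\int_{[0,1]}|F^{-1}-G^{-1}|\,du$. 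The two arguments are mathematically close kin --- your pointwise inequality $\int|\1_{\{x\le t\}}-\1_{\{y\le t\}}|\,d\pi\ge|F(t)-G(t)|$ is exactly the Fr\'echet--Hoeffding bound $H(t,t)\le\min(F(t),G(t))$ in disguise, since the left side equals $F(t)+G(t)-2H(t,t)$ --- but yours buys several things: it needs no copula machinery and no existence theorem for optimal couplings (matching bounds make optimality automatic), it proves rather than cites the layer-cake identity, it yields the companion formula $W_1(\mu,\nu)=\int_\R|F-G|\,dt$ that the paper only states as Remark \ref{CorVallender}$+1$, and it covers the case of infinite values, consistent with the theorem's lack of a moment hypothesis. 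What the paper's route buys in exchange is the copula-centric formulation (optimality of $M$ among all copulas), which is the organizing theme of the article and is what generalizes to $p>1$ and to $d$ dimensions. Your handling of the $\le$ versus $<$ conventions for the generalized inverse is correct (the Galois equivalence $F^{-1}(u)\le t\iff u\le F(t)$ fails only at $u=0$, a null set).
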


The aim of this article is to  write the $p$-Wasserstein distance  with the $p$-norm in terms of copula. In particular for the case of one-dimensional distributions, we obtain in  Theorem  \ref{main theorem}   that the copula employed to get the optimal coupling of the Wasserstein distances in Theorem \ref{Existence of optimal coupling} is the comotonicity copula. This is the maximizer for all copulas according to the Theorem of Fr\'echet-Hoeffding, recalled in Section \ref{sec:prelim}. The result obtained by Vallander in  Theorem \ref{ThVallender}  is then reformulated as a Corollary \ref{CorVallender} which follows directly from  our result in  Theorem \ref{main theorem}  and Corollary  \ref{comonotonicity result}. 
In Theorem \ref{thm:to_be_deleted} we obtain the equivalent result of Theorem  \ref{main theorem} also for $d$-dimensional distributions  under the  hypothesis that these share the same copula. In other words we have to make the assumption that in both $d$-dimensional distributions the one-dimensional marginals have the same dependence structure (see Definition \ref{share same copula} below). As explained in Remark \ref{RemarkNecessity} this is not only a sufficient but also a necessary hypothesis. The result of Vallander in Theorem \ref{ThVallender} was generalized by Alfonsi and Jourdain  for the $d$-dimensional case \cite{Alfonsi} again by  assuming that the $d$-dimensional distributions share the same copula and is reformulated here as Corollary \ref{The Wasserstain distance in terms of the generalized inverses} of our  Theorem \ref{thm:to_be_deleted}.   (See also \cite{BDS} for an infinite dimensional version of Vallander's Theorem \ref{ThVallender}  involving Banach space valued distributions)\\
Finally, let us consider a more general definition of the Wasserstein distance, denoted in \cite{Alfonsi} by $W_{p,q}$, and defined through
\begin{equation}\label{generaldefwasserstein}
(W_{p,q}(\mu,\nu))^{p}= \inf_{\pi \in \Pi(\mu, \nu)} 
    \int_{\R^{d} \times \R^{d}}  \|x-y\|_q^p \pi(dx,dy).    
\end{equation}
Assuming $p\neq q$, $p,q \geq 1$,  and that the probability measures $\mu$ and $\nu$ are sharing the same copula, Alfonsi \cite[Proposition 1.1]{Alfonsi} proved that $W_{p,q}$ cannot always be decomposed in terms of the generalized inverse functions.  However, since all norms are equivalent on $\R^d$, we get an upper and lower bounds of the Wasserstein distance $W_{p,q}$ in terms of the comonotonicity copula in Proposition \ref{bound} and consequently also in terms of the generalized inverse functions. 

The article is structured as follows. 
In Section \ref{sec:prelim}, we give a basic introduction into the fundamental results of copula theory. Section \ref{sec:main}  is devoted to the statements and  proofs of the main results Theorem \ref{main theorem} and Theorem  \ref{thm:to_be_deleted}, where the Wasserstein distance $W_p$ is obtained in terms of the comonotonicity copula. Moreover, for the case of the Wasserstein distance $W_{p,q}$ we obtain a lower and upper bounds in terms of $W_p$  in Proposition \ref{bound}. In Corollary \ref{CorhalfAlfonsi} we get as a consequence the lower and upper bounds of $W_{p,q}$ can be written in terms of  generalized inverse functions.
\section{Preliminaries}\label{sec:prelim}
In order to prove the main theorems of this article, we set up the notion of generalized inverse functions and recall the fundamentals of copula theory. Note that both univariate and multivariate (or joint) distribution functions are understood in the probabilistic sense, i.e., we also assume right continuity. \\
Firstly, it is important to recall the definition of the generalized inverse function \cite{embrechts2013note}.
\begin{Definition}
    Let $F:\R\rightarrow[0,1]$ be a distribution function. The generalized inverse function or quantile function of $F$ is defined by
    \begin{align*}
         F^{-1}(u)=\inf\{x \in \R \colon F(x)\geq u\}, u \in [0,1].
    \end{align*}
\end{Definition}
For the definition of the Wasserstein distances, we need the notion of coupling. Recall that for probability measures $\mu,\nu$ on $\cB(\R^d)$, a probability measure $\pi$ on $\cB(\R^d\times\R^d)$ is called \emph{coupling} of $\mu$ and $\nu$ if for any Borel set $A\in\cB(\R^d)$,
\begin{displaymath}
 \pi(A\times\R^d)=\mu(A)\text{ and }\pi(\R^d\times A)=\nu(A).
\end{displaymath}
In other words, the marginals of $\pi$ are given by $\mu$ and $\nu$.
Next, we introduce the notion of copula, which describes the dependence  of a random vector from its marginals. This dependence is well understood  understood thanks to   Sklar's Theorem \ref{Sklar's theorem}, cited below.
\begin{Definition}
A function $C\colon[0,1]^d\to[0,1]$ is called a ($d$-dimensional) copula if the following properties are fulfilled:
\begin{enumerate}
    \item For all $u\in[0,1]^d$ s.t. $u_i=0$ for some $i$, we have $C(u)=0$ (groundedness).
    \item For all $(a_1,\dotsc,a_d),(b_1,\dotsc,b_d)\in[0,1]^d$ with $a_i\leq b_i$ for all $i$, we have
    \begin{displaymath}
     \sum_{i_1=1}^2\dotsi\sum_{i_d=1}^2(-1)^{i_1+\dotsb i_d}C(u_{1i_1},\dotsc,u_{d i_d})\geq 0,
    \end{displaymath}
    where $u_{j1}=a_j$ and $u_{j2}=b_j$ for all $j\in\{1,\dotsc,d\}$ ($d$-increasing).
    \item $C(1,\dotsc,1,u_i,1,\dotsc,1)=u_i$ for all $i\in\{1,\dotsc,d\}$ and $u_i\in[0,1]$ (uniform margins).
\end{enumerate}
\end{Definition}
By inspecting the definition of copulas, one notices that they can be viewed as multivariate distribution functions on $[0,1]^d$ which have uniform margins on $[0,1]$. 
A detailed introduction and analysis of copulas can be found in \cite{Nelsen}. We review some of the essential results below. The following theorem is fundamental in the theory of copulas. It shows the relationship between multivariate distribution functions and their univariate margins.
\begin{Theorem}[{Sklar's theorem, \cite[Theorem 2.10.9]{Nelsen}}]\label{Sklar's theorem}
Let $H$ be a $d$-dimensional distribution function with margins $F_{1},\dotsc, F_{d}$. Then there exists a $d$-copula $C$ such that for all $x=(x_{1},\dotsc,x_{d}) \in\R^{d}$
\begin{equation}\label{eq:sklar_identity}
    H(x_{1},\dotsc,x_{d})=C(F_{1}(x_{1}),\dotsc,F_{d}(x_{d})).
\end{equation}
On the other hand, let $C$ be a copula and $F_1,\dotsc,F_d$ one-dimensional distribution functions. Then the function $H$ defined by \eqref{eq:sklar_identity} is a $d$-dimensional joint distribution function with margins $F_1,\dotsc,F_d$.
\end{Theorem}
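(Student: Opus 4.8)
The plan is to prove Sklar's theorem in two parts: first the existence of a copula $C$ representing $H$ through its univariate margins, then the converse construction.

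For the existence part, I would work with random variables. Let $(X_1,\dots,X_d)$ be a random vector with joint distribution function $H$ and marginal distribution functions $F_1,\dots,F_d$. If every $F_i$ were continuous, one could simply set $C(u_1,\dots,u_d):=H(F_1^{-1}(u_1),\dots,F_d^{-1}(u_d))$ and check directly that this is a copula satisfying \eqref{eq:sklar_identity}, using $F_i\circ F_i^{-1}=\mathrm{id}$ on the range of $F_i$. To handle atoms of the $F_i$ in general, I would use the distributional transform: enlarge the space to carry random variables $V_1,\dots,V_d$, each uniform on $[0,1]$ and jointly independent of $(X_1,\dots,X_d)$, and set $U_i:=F_i(X_i-)+V_i\bigl(F_i(X_i)-F_i(X_i-)\bigr)$. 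One then checks that each $U_i$ is uniform on $[0,1]$ and that $F_i^{-1}(U_i)=X_i$ almost surely. Defining $C$ to be the joint distribution function of $(U_1,\dots,U_d)$, it is automatically a copula: it is a genuine $d$-dimensional distribution function on $[0,1]^d$, hence grounded and $d$-increasing, and it has uniform margins by construction. The identity \eqref{eq:sklar_identity} then follows from the Galois relation $F_i^{-1}(u)\le x\iff u\le F_i(x)$, valid for right-continuous nondecreasing $F_i$, since
\[
H(x_1,\dots,x_d)=\P\bigl(X_i\le x_i\ \forall i\bigr)=\P\bigl(F_i^{-1}(U_i)\le x_i\ \forall i\bigr)=\P\bigl(U_i\le F_i(x_i)\ \forall i\bigr)=C\bigl(F_1(x_1),\dots,F_d(x_d)\bigr).
\]

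For the converse, suppose $C$ is a copula and $F_1,\dots,F_d$ are one-dimensional distribution functions, and define $H$ by \eqref{eq:sklar_identity}. I would verify the axioms of a $d$-dimensional distribution function one at a time: $H\to 0$ when any $x_j\to-\infty$ follows from groundedness of $C$ and $F_j(x_j)\to 0$; $H\to 1$ when all $x_j\to +\infty$ follows from $C(1,\dots,1)=1$; right-continuity of $H$ in each variable follows from right-continuity of the $F_j$ together with continuity of $C$ (a copula is Lipschitz, hence continuous); and the $d$-increasing (rectangle) inequality for $H$ on a box $\prod_j(a_j,b_j]$ reduces, via $u_{j1}:=F_j(a_j)\le F_j(b_j)=:u_{j2}$, exactly to the $d$-increasing inequality for $C$ on the (possibly degenerate) box $\prod_j(u_{j1},u_{j2}]$, which is nonnegative. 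Finally the margins of $H$ are recovered by sending all but the $i$-th argument to $+\infty$: by the uniform-margins property, $H(+\infty,\dots,x_i,\dots,+\infty)=C(1,\dots,F_i(x_i),\dots,1)=F_i(x_i)$.

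The main obstacle is the non-continuous case in the existence part: when some $F_i$ has jumps, the naive formula $H(F_1^{-1}(u_1),\dots,F_d^{-1}(u_d))$ need not have uniform margins, and the representation can fail off the closure of $\mathrm{Ran}(F_1)\times\cdots\times\mathrm{Ran}(F_d)$. The distributional-transform device — or, alternatively, Sklar's original route of first defining a subcopula on $\overline{\mathrm{Ran}(F_1)}\times\cdots\times\overline{\mathrm{Ran}(F_d)}$ and then invoking a multilinear (checkerboard) extension lemma to a full copula — is precisely what resolves this, and I would present the distributional-transform version since it is the shortest and keeps the argument probabilistic. As a byproduct one also obtains that $C$ is uniquely determined on $\mathrm{Ran}(F_1)\times\cdots\times\mathrm{Ran}(F_d)$, hence uniquely when all margins are continuous, though this uniqueness is not part of the stated theorem.
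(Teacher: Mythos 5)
The paper does not prove this statement at all: Sklar's theorem is quoted verbatim as a known result with a citation to \cite[Theorem 2.10.9]{Nelsen}, so there is no in-paper argument to compare against. Your proposal is a correct, self-contained proof, and it takes a genuinely different route from the one in the cited source. Nelsen's proof is analytic: one first shows that $H$ induces a subcopula on $\overline{\mathrm{Ran}(F_1)}\times\dotsb\times\overline{\mathrm{Ran}(F_d)}$ and then extends it to a full copula by a multilinear interpolation lemma; your distributional-transform argument instead realizes the copula directly as the joint law of $(U_1,\dotsc,U_d)$ with $U_i=F_i(X_i-)+V_i\bigl(F_i(X_i)-F_i(X_i-)\bigr)$, which makes groundedness, the $d$-increasing property and the uniform margins automatic and reduces the Sklar identity to the Galois relation $F_i^{-1}(u)\le x\iff u\le F_i(x)$ (valid on a set of full measure for $U_i$). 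The probabilistic route is shorter and fits the spirit of this paper, which works throughout with comonotonic random vectors and generalized inverses; the subcopula route has the advantage of being purely measure-free and of exhibiting explicitly the non-uniqueness of $C$ off $\mathrm{Ran}(F_1)\times\dotsb\times\mathrm{Ran}(F_d)$. Your converse direction is the standard direct verification and is correct, including the observation that the rectangle inequality for $H$ reduces to that for $C$ on a possibly degenerate box; alternatively it follows in one line by pushing forward the measure induced by $C$ under $u\mapsto(F_1^{-1}(u_1),\dotsc,F_d^{-1}(u_d))$. I see no gaps.
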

One elementary result regarding joint distribution functions and copulas are the so-called Fr\'echet-Hoeffding bounds. They are defined as 
  \begin{equation}\label{Comonotinicity copula}
    M^{d}(u_1,\dotsc,u_d) := \min(u_1,\dotsc,u_d) 
  \end{equation}
   \begin{equation*}
    W^{d}(u_1,\dotsc,u_d) := \max(u_1+u_2+\dotsb+u_d-d+1,0)
  \end{equation*}
In the sequel, we omit the dimension index when no confusion may arise.
While $M^d$ is a $d$-copula for any $d\geq 2$, known as the comonotonicity copula, this is not true for $W^d$ as soon as $d>2$. However, these functions do not only represent essential dependence structures, they also serve as elementary bounds for any other copula. More precisely, we have the following result:

\begin{Theorem}[{Fréchet-Hoeffding bounds in $d$-dimensions, \cite[Theorem. 2.10.12]{Nelsen}}]\label{Fréchet-Hoeffding bounds}
    For any $d$-copula $C$ and for all $(x_1,\dotsc, x_d) \in \R^d$, 
    \begin{align*}
    W^{d}(F_{1}(x_{1}),\dotsc,F_{d}(x_{d})) &\leq C(F_{1}(x_{1}),\dotsc,F_{d}(x_{d})) \leq  M^{d}(F_{1}(x_{1}),\dotsc,F_{d}(x_{d})),
    \end{align*}
\end{Theorem}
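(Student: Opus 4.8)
The plan is to prove the stronger pointwise statement that for every $d$-copula $C$ and every $(u_1,\dotsc,u_d)\in[0,1]^d$ one has
$W^d(u_1,\dotsc,u_d)\le C(u_1,\dotsc,u_d)\le M^d(u_1,\dotsc,u_d)$, after which the theorem follows immediately by substituting $u_i=F_i(x_i)\in[0,1]$. As observed in the text right after the definition of a copula, $C$ may be regarded as the joint distribution function of a random vector $(U_1,\dotsc,U_d)$ on $[0,1]^d$ each coordinate of which is uniform on $[0,1]$, i.e. $C(u_1,\dotsc,u_d)=\P(U_1\le u_1,\dotsc,U_d\le u_d)$; I would use this representation throughout, noting that everything can equivalently be phrased directly from the three defining axioms.

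For the upper bound I would first record that $C$ is non-decreasing in each argument: applying the $d$-increasing property to a box in which all coordinates except the $i$-th range over $[0,1]$ and using groundedness collapses every term but two, giving $C(\dotsc,b_i,\dotsc)-C(\dotsc,a_i,\dotsc)\ge 0$ for $a_i\le b_i$. Replacing all arguments except the $i$-th by $1$ and invoking the uniform-margin axiom then yields $C(u_1,\dotsc,u_d)\le C(1,\dotsc,1,u_i,1,\dotsc,1)=u_i$ for each $i$, hence $C(u_1,\dotsc,u_d)\le\min_i u_i=M^d(u_1,\dotsc,u_d)$. In the probabilistic picture this is just $\P\bigl(\bigcap_j\{U_j\le u_j\}\bigr)\le\P(U_i\le u_i)=u_i$.

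For the lower bound, nonnegativity of $C$ (its range is $[0,1]$) gives $C(u_1,\dotsc,u_d)\ge 0$, so it remains to show $C(u_1,\dotsc,u_d)\ge u_1+\dotsb+u_d-d+1$, and then the two together give $C\ge W^d$. I would argue by induction on $d$. For $d=2$, the non-negativity of the $C$-volume of $[u_1,1]\times[u_2,1]$ expands, via the $2$-increasing property and the uniform margins, to $1-u_1-u_2+C(u_1,u_2)\ge 0$. For the inductive step, note that $(v_1,\dotsc,v_{d-1})\mapsto C(v_1,\dotsc,v_{d-1},1)$ is a $(d-1)$-copula (a margin of a copula is a copula), and use the elementary inequality $\P(A\cap B)\ge\P(A)-\P(B^c)$ with $A=\{U_1\le u_1,\dotsc,U_{d-1}\le u_{d-1}\}$ and $B=\{U_d\le u_d\}$: this gives $C(u_1,\dotsc,u_d)\ge C(u_1,\dotsc,u_{d-1},1)-(1-u_d)\ge\bigl(u_1+\dotsb+u_{d-1}-(d-1)+1\bigr)-(1-u_d)$ by the induction hypothesis, which rearranges to exactly $u_1+\dotsb+u_d-d+1$.

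I expect the lower bound to be the only delicate point. Unlike $M^d$, the function $W^d$ is not itself a copula for $d>2$, so it cannot be produced by the same "plug in the extreme coordinate values" trick that handles the upper bound, and a direct expansion of the $C$-volume of $\prod_i[u_i,1]$ yields many cross terms $C$ evaluated at intermediate corners that fail to telescope. The induction above — equivalently, a single use of Boole's inequality $\P\bigl(\bigcup_i\{U_i>u_i\}\bigr)\le\sum_i(1-u_i)$ — is precisely what sidesteps this difficulty. Everything else, namely the reduction to $[0,1]^d$, monotonicity in each coordinate, and the final substitution $u_i=F_i(x_i)$, is routine.
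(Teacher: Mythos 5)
Your proof is correct and complete; note that the paper does not prove this statement at all but simply cites it as \cite[Theorem 2.10.12]{Nelsen}, so there is no in-paper argument to compare against. Your two steps --- the upper bound via coordinatewise monotonicity plus the uniform-margin axiom, and the lower bound via Boole's inequality $\P\bigl(\bigcap_i\{U_i\le u_i\}\bigr)\ge 1-\sum_i(1-u_i)$ (equivalently your induction on $d$) --- are exactly the standard textbook proof, and your reduction of the stated form to the pointwise bound on $[0,1]^d$ by substituting $u_i=F_i(x_i)$ is the right way to handle the way the theorem is phrased here.
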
 
The following theorem makes this notion precise.
heFor $a<b$, denote by $\cU(a,b)$ the uniform distribution on the interval $(a,b)$.
\begin{Theorem}[{Equivalent conditions comonotonicity, \cite[Theorem 2]{DDGKV2001}}]\label{Equivalent conditions comonotonicity}
Let $(\Omega, \mathcal{F}, \P)$ be a probability space, $X,Y$ be two $\R$-valued random variables on $(\Omega, \mathcal{F}, \P)$ with distribution functions $F$ and $G$, respectively and joint distribution function $H$. A random vector $(X,Y)$ is comonotonic if and only if one of the following equivalent conditions holds:
    \begin{enumerate}
     \item For all $(x,y) \in \R^{2}$, we have
        \begin{align*}
            H(x,y)= M(F(x),G(y)).
        \end{align*}
        \item For $U \sim \cU(0,1)$, we have
        \begin{align} \label{comonotonic vector}
            (X,Y)\overset{d}{=} (F^{-1}(U), G^{-1}(U)).
        \end{align}
    \end{enumerate}
\end{Theorem}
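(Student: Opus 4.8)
The plan is to prove the stated equivalences by running the cycle ``$(X,Y)$ comonotonic'' $\Rightarrow$ (1) $\Rightarrow$ (2) $\Rightarrow$ ``$(X,Y)$ comonotonic'', taking the underlying notion of comonotonicity of a random vector to be that it admits a comonotonic support, i.e.\ there is a Borel set $A\subseteq\R^{2}$ with $\P((X,Y)\in A)=1$ and $(x_{1}-x_{2})(y_{1}-y_{2})\geq 0$ for all $(x_{1},y_{1}),(x_{2},y_{2})\in A$. (If instead ``comonotonic'' is read as ``the copula of $(X,Y)$ is $M$'', then by Sklar's Theorem \ref{Sklar's theorem} condition (1) is merely a restatement of the definition and only (1) $\Leftrightarrow$ (2) needs an argument.) The two tools I would rely on throughout are the generalized-inverse identity $F^{-1}(u)\leq x\iff u\leq F(x)$, valid for any right-continuous distribution function, and the quantile transform $F^{-1}(U)\sim F$ for $U\sim\cU(0,1)$; these are precisely where the right-continuity assumed in Section \ref{sec:prelim} is used.

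For ``$(X,Y)$ comonotonic'' $\Rightarrow$ (1), I would fix $(x,y)\in\R^{2}$ and first show that $\P(X>x,\,Y\leq y)=0$ or $\P(X\leq x,\,Y>y)=0$: if both were positive then, since any set disjoint from $A$ is $(X,Y)$-null, each of the two regions would meet $A$, producing points $(x_{1},y_{1})\in A$ with $x_{1}>x$, $y_{1}\leq y$ and $(x_{2},y_{2})\in A$ with $x_{2}\leq x$, $y_{2}>y$, whence $x_{1}>x_{2}$ and $y_{1}<y_{2}$, contradicting comonotonicity of $A$. In the first case $\{X\leq x\}\subseteq\{Y\leq y\}$ up to a null set, so $H(x,y)=F(x)\leq G(y)$; in the second, $H(x,y)=G(y)\leq F(x)$. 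Either way $H(x,y)=\min(F(x),G(y))=M(F(x),G(y))$, which is (1).

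For (1) $\Rightarrow$ (2), I would put $(\tilde X,\tilde Y)=(F^{-1}(U),G^{-1}(U))$ with $U\sim\cU(0,1)$; by the quantile transform $\tilde X\sim F$, $\tilde Y\sim G$, and applying the inverse identity to $F$ and to $G$,
\begin{align*}
 \P(\tilde X\leq x,\,\tilde Y\leq y)=\P\bigl(U\leq F(x),\ U\leq G(y)\bigr)=\min\bigl(F(x),G(y)\bigr)=H(x,y)
\end{align*}
for every $(x,y)$, so $(\tilde X,\tilde Y)$ and $(X,Y)$ have the same joint distribution function and hence the same law. Finally, for (2) $\Rightarrow$ ``$(X,Y)$ comonotonic'', I would note that the support of $(F^{-1}(U),G^{-1}(U))$ is contained in the closure of $\{(F^{-1}(u),G^{-1}(u)):u\in(0,1)\}$, which is totally ordered for the componentwise order because $F^{-1}$ and $G^{-1}$ are non-decreasing; its closure is then also totally ordered, and since comonotonicity depends only on the law, $(X,Y)$ inherits a comonotonic support.

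The steps I expect to be routine but in genuine need of care are the two properties of generalized inverses — their behaviour at atoms of $\mu$ and $\nu$ and on intervals where $F$ or $G$ is flat, and the harmless fact that $F^{-1}$ may be $\pm\infty$ on a $U$-null set — together with the measure-theoretic bookkeeping in the direction ``$(X,Y)$ comonotonic'' $\Rightarrow$ (1) that upgrades ``disjoint from a full-measure set'' to ``null''. None of these is a real obstacle; the only true decision point is fixing which notion of ``comonotonic random vector'' is in force, since that determines which of the three implications is a definitional triviality.
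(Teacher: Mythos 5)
The paper does not prove this statement at all: it is quoted verbatim from \cite[Theorem 2]{DDGKV2001}, and moreover the paper only \emph{defines} ``comonotonic'' (in the sentence immediately following the theorem) as the validity of \eqref{comonotonic vector}, so that within the paper's own logic the substantive content reduces to the equivalence of (1) and (2). Your proposal correctly identifies this definitional ambiguity and supplies a complete, self-contained argument for the full cycle under the standard ``comonotonic support'' definition used in \cite{DDGKV2001}; this is essentially the classical proof and it is sound. The key steps all check out: the dichotomy $\P(X>x,\,Y\leq y)=0$ or $\P(X\leq x,\,Y>y)=0$ does follow from the support argument, the computation $\P(U\leq F(x),\,U\leq G(y))=\min(F(x),G(y))$ is the right use of the inverse identity $F^{-1}(u)\leq x\iff u\leq F(x)$ for right-continuous $F$, and the image of $(0,1)$ under $u\mapsto(F^{-1}(u),G^{-1}(u))$ is indeed a chain for the componentwise order whose closure remains comonotonic. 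One trivial slip: in the direction ``comonotonic $\Rightarrow$ (1)'' you have the two cases transposed --- $\P(X>x,\,Y\leq y)=0$ gives $\{Y\leq y\}\subseteq\{X\leq x\}$ up to a null set and hence $H(x,y)=G(y)\leq F(x)$, not the inclusion you wrote --- but since both branches yield $H(x,y)=\min(F(x),G(y))$ this does not affect the conclusion.
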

 A random vector $(X,Y)$ is called  comonotonic if it satisfies \eqref{comonotonic vector} in Theorem \ref{Equivalent conditions comonotonicity}.
Hence, the following corollary, that is used in the proofs of our main results Theorem \ref{main theorem} and Theorem \ref{thm:to_be_deleted}, is an immediate consequence.

\begin{Corollary}\label{comonotonicity result}
   Let $(\Omega, \mathcal{F}, \P)$ be a probability space, $X,Y$ be two $\R$-valued random variables on $(\Omega, \mathcal{F}, \P)$ with distribution functions $F$ and $G$. If $(X,Y)$ is a comonotonic random vector then for any integrable measurable function $g$,
 \begin{equation*}
  \E[g(X,Y)]=\int_{[0,1]} g(F^{-1}(u),G^{-1}(u)) du.
 \end{equation*} 
\end{Corollary}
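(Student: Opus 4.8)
The plan is to reduce everything to the distributional identity \eqref{comonotonic vector} supplied by Theorem \ref{Equivalent conditions comonotonicity} and then invoke the change-of-variables formula for expectations. First I would fix $U\sim\cU(0,1)$ on $(\Omega,\mathcal F,\P)$ (or, if necessary, pass to an enlarged space carrying such a $U$; this does not affect the value of $\E[g(X,Y)]$, which depends only on the law of $(X,Y)$). Since $(X,Y)$ is comonotonic, Theorem \ref{Equivalent conditions comonotonicity}, part (2), gives
\begin{displaymath}
  (X,Y)\overset{d}{=}(F^{-1}(U),G^{-1}(U)),
\end{displaymath}
i.e. these two $\R^2$-valued random vectors induce the same push-forward measure on $\cB(\R^2)$.

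Next I would note that equality in distribution of random vectors is preserved under composition with a fixed Borel map: for the measurable $g\colon\R^2\to\R$, the real random variables $g(X,Y)$ and $g(F^{-1}(U),G^{-1}(U))$ have the same law on $\cB(\R)$. In particular one is integrable if and only if the other is, and the hypothesis that $g(X,Y)$ is integrable (this is what ``integrable measurable function $g$'' should be read to mean) therefore transfers to $g(F^{-1}(U),G^{-1}(U))$, and
\begin{displaymath}
  \E[g(X,Y)]=\E\bigl[g(F^{-1}(U),G^{-1}(U))\bigr].
\end{displaymath}
Here I would also remark that $F^{-1}$ and $G^{-1}$ are nondecreasing, hence Borel measurable, so $u\mapsto g(F^{-1}(u),G^{-1}(u))$ is a genuine Borel function on $[0,1]$ and the right-hand integral in the statement makes sense.

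Finally I would apply the transformation theorem (law of the unconscious statistician): since the law of $U$ on $[0,1]$ is Lebesgue measure,
\begin{displaymath}
  \E\bigl[g(F^{-1}(U),G^{-1}(U))\bigr]=\int_{[0,1]}g\bigl(F^{-1}(u),G^{-1}(u)\bigr)\,du,
\end{displaymath}
which combined with the previous display yields the claim. I do not expect a genuine obstacle here — the argument is essentially bookkeeping — but the one point that deserves a line of care is the well-definedness/integrability transfer described above (ensuring both sides exist simultaneously and that $u\mapsto g(F^{-1}(u),G^{-1}(u))$ is measurable), since the statement quantifies over all such $g$ rather than, say, bounded continuous ones.
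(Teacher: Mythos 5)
Your proof is correct and follows essentially the same route as the paper: invoke Theorem \ref{Equivalent conditions comonotonicity} to get $(X,Y)\overset{d}{=}(F^{-1}(U),G^{-1}(U))$ for $U\sim\cU(0,1)$, and then apply the change-of-variables formula for the expectation with respect to the uniform law. Your extra remarks on measurability of $u\mapsto g(F^{-1}(u),G^{-1}(u))$ and on the transfer of integrability are sensible bookkeeping that the paper leaves implicit.
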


\section{Main results and discussion}\label{sec:main}

Our first result is the representation of the Wasserstein distance on $\R$ in terms of the comonotonicity copula. In fact, we show that the optimal coupling is always given by the two-dimensional comonotonicity copula, called here also $M$-copula.
\begin{Theorem}[Wasserstein distance in terms of copula in $\R$]\label{main theorem}
Let $\mu, \nu$ be two probability measures in $P_{p}(\R)$. 
Let $F$ and $G$ be the associated distribution functions. Then for all $p\in [1,\infty)$,
\begin{align*}
    (W_{p}(\mu,\nu))^{p}&=\int_{\R}\int_{\R} |x-y|^{p} dM(F(x),G(y))
\end{align*}
\end{Theorem}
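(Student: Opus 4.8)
The plan is to show the two inequalities separately. First I would establish that $(W_p(\mu,\nu))^p \leq \int_\R\int_\R |x-y|^p\, dM(F(x),G(y))$. The key observation is that the right-hand side is itself an integral against a specific coupling: by Sklar's Theorem \ref{Sklar's theorem}, the function $(x,y)\mapsto M(F(x),G(y))$ is a two-dimensional joint distribution function with marginals $F$ and $G$, hence it induces a probability measure $\pi_M$ on $\cB(\R\times\R)$ that lies in $\Pi(\mu,\nu)$. Therefore $\int_\R\int_\R |x-y|^p\, dM(F(x),G(y)) = \int_{\R\times\R} |x-y|^p\, \pi_M(dx,dy)$ is one particular element of the set over which the infimum in \eqref{Wm} is taken, so it dominates $(W_p(\mu,\nu))^p$.

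For the reverse inequality $(W_p(\mu,\nu))^p \geq \int_\R\int_\R |x-y|^p\, dM(F(x),G(y))$, I would use Theorem \ref{Existence of optimal coupling} to pick an optimal coupling $\hat\pi \in \Pi(\mu,\nu)$ achieving $(W_p(\mu,\nu))^p = \int_{\R\times\R} |x-y|^p\, \hat\pi(dx,dy)$, and let $(X,Y)$ be a random vector with law $\hat\pi$, so $X\sim\mu$, $Y\sim\nu$. The goal is to compare $\E[|X-Y|^p]$ with $\E[|F^{-1}(U)-G^{-1}(U)|^p]$ for $U\sim\cU(0,1)$; by Corollary \ref{comonotonicity result} (applied to the comonotonic vector $(F^{-1}(U),G^{-1}(U))$ and $g(x,y)=|x-y|^p$), the latter equals $\int_{[0,1]}|F^{-1}(u)-G^{-1}(u)|^p\,du$, which in turn equals $\int_\R\int_\R |x-y|^p\, dM(F(x),G(y))$ since $(F^{-1}(U),G^{-1}(U))$ has joint distribution function $M(F(\cdot),G(\cdot))$ by Theorem \ref{Equivalent conditions comonotonicity}. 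The heart of the matter is thus the rearrangement-type inequality: among all couplings of $\mu$ and $\nu$, the comonotonic one minimizes $\E[c(X-Y)]$ for convex $c$ (here $c(t)=|t|^p$). I would prove this via the Fréchet–Hoeffding upper bound (Theorem \ref{Fréchet-Hoeffding bounds}): writing $|x-y|^p$ as an integral of indicator-type kernels and using Hoeffding's covariance identity, or more directly, invoking that for convex $c$ one has $\int c\,d\pi \geq \int c\, dM(F,G)$ whenever $\pi$ has marginals $F,G$ — a consequence of $H_\pi(x,y)\leq M(F(x),G(y))$ pointwise together with a suitable integration-by-parts representation of $\E[c(X-Y)]$ in terms of $H_\pi$.

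The main obstacle I anticipate is making the convexity/rearrangement step fully rigorous at the level of general probability measures (possibly with atoms, unbounded support, and only finite $p$-th moments rather than bounded): one must justify the integration-by-parts / Hoeffding identity $\E[\phi(X)\psi(Y)] - \E[\phi(X)]\E[\psi(Y)] = \iint (H(x,y)-F(x)G(y))\,d\phi(x)\,d\psi(y)$ and its extension to $c(x-y)$, control boundary terms at $\pm\infty$ using the $P_p$ assumption, and handle the generalized inverses carefully (e.g. $F(F^{-1}(u))\geq u$, and $F^{-1}(U)\sim\mu$). An alternative that sidesteps part of this is a direct coupling argument: given any coupling, show by a "cyclical monotonicity" or explicit swapping argument that replacing it by the comonotonic coupling does not increase the cost, but at the level of measures this again reduces to the same Fréchet–Hoeffding comparison. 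I would present the cleanest route: lower bound via Fréchet–Hoeffding plus the standard identity expressing $\E[|X-Y|^p]$ through the joint c.d.f., upper bound via exhibiting $\pi_M$ as an admissible coupling, and then identify the common value using Corollary \ref{comonotonicity result}.
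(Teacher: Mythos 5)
Your proposal is correct and follows the same skeleton as the paper's proof: upper bound by exhibiting the comonotonic law as an admissible coupling, lower bound by combining the Fr\'echet--Hoeffding upper bound with an identity expressing the cost through the joint distribution function, and identification of the resulting value via Corollary \ref{comonotonicity result}. The one substantive difference is how the lower bound is discharged. For $p=1$ the paper carries out exactly the computation you describe, using Vallander's identity $\E[|X-Y|]=\int_{\R}[F(y)+G(y)-2H(y,y)]\,dy$ (a special case of the Hoeffding covariance identity you invoke) together with $H(y,y)=C(F(y),G(y))\le M(F(y),G(y))$. For $p>1$ the paper does \emph{not} reprove the convex-cost rearrangement inequality: it cites Dall'Aglio's Theorem \ref{tails}, which states precisely that $\iint|x-y|^p\,d\pi$ is minimized over couplings by $\pi(x,y)=\min\{F(x),G(y)\}$, and only checks that the tail hypotheses of that theorem follow from $\mu,\nu\in P_p(\R)$. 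The step you flag as the main obstacle --- justifying the integration-by-parts representation of $\E[c(X-Y)]$ for general convex $c$ with only $p$-th moments, atoms, and unbounded support --- is therefore exactly the content of the classical result the paper outsources; completing it from scratch is a genuine (if standard, cf.\ Cambanis--Simons--Stout/Tchen) piece of work that your sketch does not yet supply. Conversely, your treatment of the upper bound is more explicit than the paper's, which leaves implicit that $M(F(\cdot),G(\cdot))$ induces an element of $\Pi(\mu,\nu)$ via Sklar's Theorem \ref{Sklar's theorem} and hence that the displayed integral dominates the infimum.
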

The proof of Theorem \ref{main theorem} will be divided into the cases $p=1$ and $p>1$.
\begin{proof}[Proof of Theorem \ref{main theorem}, $p=1$].
By Theorem \ref{optimal transport cost}, there exists an optimal coupling $\hat{\pi}\in\Pi(\mu,\nu)$ such that
\begin{equation}\label{optimal coupling}
    W_1(\mu,\nu)=\int_{\R}\int_{\R}\lvert x-y\rvert\hat{\pi}(dx,dy).
\end{equation}
Hence, following \cite[Theorem 14.1]{Billingsley}, for $F$ and $G$ two distribution functions there exists on some probability space $(\Omega, \mathcal{F}, \P)$ a couple of two random variables $X$ and $Y$  such that $X\sim \mu$, and $Y \sim \nu$ and whose joint distribution function is $\hat{\pi}$. Hence, from \eqref{optimal coupling} follows
\begin{equation*}
    W_1(\mu,\nu)= \E\big[|X-Y| \big].
\end{equation*}
Moreover, it was proven in \cite{vallander_ru} that 
 \begin{align*}
  \E\big[|X-Y| \big] &= \int_{\R} \left[ \P(X\leq y)+\P(Y\leq y)-2\P(X\leq y, Y\leq y)  \right] dy.
 \end{align*}
Denoting by $C$ the copula corresponding to $X$ and $Y$, by Theorems \ref{Sklar's theorem} and \ref{Fréchet-Hoeffding bounds}, we get
    \begin{align*}
        \E\big[|X-Y| \big]&= \int_{\R} [F(y)+G(y)-2\P(X\leq y, Y\leq y)]dy 
        \\
        &= \int_{\R} [F(y)+G(y)-2C(F(y),G(y))]dy
         \\
         &\geq  \int_{\R} [F(y)+G(y)-2M(F(y),G(y))]dy.
    \end{align*}
Let $(\tilde{X},\tilde{Y})$ be a comonotonic random vector with distributions $\mu$ and $\nu$, respectively. Its copula is given by $M$. Therefore,
\begin{align}
       W_1(\mu,\nu)\ge\int_{\R} [ F(y)+G(y)-2M(F(y), G(y))] dy= \E\big[|\tilde{X}-\tilde{Y}|\big].
   \end{align}
    Furthermore, it follows
    \begin{align*}
        W_{1}(\mu,\nu)= \E\big[|\tilde{X}-\tilde{Y}|\big]&=\int_{\R} \int_{\R} |x-y|^{p} dM(F(x),G(y)).
    \end{align*} 
\end{proof}
In order to consider the case $p>1$, we first introduce the following result by Dall'Aglio \cite{DALL’AGLIO}. 
\begin{Theorem}\cite[Theorem X]{DALL’AGLIO}.\label{tails}
 Let $p\in (1,\infty)$. Let $F$ and $G$ distribution functions satisfying 
  \begin{equation} \label{first assymption DALL’AGLIO }
       \lim_{x\rightarrow +\infty} x^p [1-F(x)]=\lim_{x\rightarrow +\infty} x^pF(-x)=0,
  \end{equation}
  \begin{equation} \label{second assymption DALL’AGLIO }
       \lim_{x\rightarrow +\infty} x^p [1-F(x)]=\lim_{x\rightarrow +\infty} x^pF(-x)=0,
  \end{equation}
Let $\pi$ be a joint distribution function with marginals $F$ and $G$. Then, 
\begin{align*}
    I(\pi):=\int_{\R} \int_{\R} |x-y|^p d\pi(x,y)
\end{align*}
is minimized by
\begin{equation}
    \pi(x,y)=\min\{F(x),G(y)\}.
\end{equation}
\end{Theorem}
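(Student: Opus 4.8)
The plan is to reduce the claim to the Fréchet--Hoeffding upper bound (Theorem~\ref{Fréchet-Hoeffding bounds}) by means of a Hoeffding-type integration-by-parts identity. Throughout, identify a coupling $\pi$ of $F$ and $G$ with its joint distribution function $(x,y)\mapsto\pi(x,y):=\pi((-\infty,x]\times(-\infty,y])$ (as in the statement), write $c(x,y):=|x-y|^{p}$, and let $M_{F,G}(x,y):=M(F(x),G(y))=\min(F(x),G(y))$ be the joint distribution function of the comonotonic coupling. If $I(\pi)=+\infty$ there is nothing to prove, since the tail hypotheses on $F$ and $G$ guarantee $\int_{\R}\int_{\R}c\,dM_{F,G}<\infty$ (for instance by rewriting this integral via Corollary~\ref{comonotonicity result} as $\int_{[0,1]}|F^{-1}(u)-G^{-1}(u)|^{p}\,du$). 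So assume $I(\pi)<\infty$.

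The core step is the identity
\[
I(\pi)-\int_{\R}\int_{\R}c(x,y)\,dM_{F,G}(x,y)
=\int_{\R}\int_{\R}\frac{\partial^{2}c}{\partial x\,\partial y}(x,y)\,\bigl[\pi(x,y)-M_{F,G}(x,y)\bigr]\,dx\,dy,
\]
where $\dfrac{\partial^{2}c}{\partial x\,\partial y}(x,y)=-p(p-1)\,|x-y|^{p-2}$ for $x\neq y$. One obtains it by writing both functionals as $\iint c\,dH$ for the respective joint distribution functions $H$, applying Fubini, and integrating by parts twice so as to transfer the derivatives from $H$ onto $c$; since the two joint distribution functions share the marginals $F,G$, all boundary contributions at $\pm\infty$ cancel in the difference, and the tail hypotheses are exactly what make these boundary terms vanish and the remaining integrals absolutely convergent (note $0\le\pi(x,y)\le\min(F(x),G(y))$ and $\pi(x,y)\ge F(x)+G(y)-1$, so the increment $\pi(x,y)-M_{F,G}(x,y)$ is squeezed by marginal quantities). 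Granting the identity, the conclusion is immediate from two sign observations: $\partial^{2}c/\partial x\,\partial y=-p(p-1)|x-y|^{p-2}\le 0$ since $p>1$, and, by Theorem~\ref{Fréchet-Hoeffding bounds}, $\pi(x,y)-M_{F,G}(x,y)\le 0$ for every $(x,y)$. Hence the integrand on the right is nonnegative, so $I(\pi)\ge\int\int c\,dM_{F,G}$, with equality for the comonotonic coupling; that is, $I(\pi)$ is minimized by $\pi(x,y)=\min\{F(x),G(y)\}$.

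The main obstacle is making the integration-by-parts identity rigorous, because $c(x,y)=|x-y|^{p}$ is not twice differentiable on the diagonal and, for $1<p<2$, its mixed second derivative blows up like $|x-y|^{p-2}$ there. One has to check that this singularity is harmless: $|x-y|^{p-2}$ is locally integrable in the plane for $p>1$, and against the increment $\pi(x,y)-M_{F,G}(x,y)$ the relevant contribution localizes near $\{x=y\}$, where that increment is small, so the double integral converges. The clean way to carry this out is to first prove the identity for smooth, strictly convex approximations $c_{\varepsilon}$ of $|x-y|^{p}$ (or for compactly supported truncations of $F$ and $G$), where the integration by parts is unproblematic, and then pass to the limit by dominated convergence, the dominating functions being supplied by the tail conditions.

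An alternative, purely probabilistic route avoids the singular integral altogether: invoke the existence of an optimal coupling $\hat{\pi}$ (Theorem~\ref{Existence of optimal coupling}) and show, by a swapping argument, that strict submodularity of $c$---namely $c(x_{1},y_{2})+c(x_{2},y_{1})<c(x_{1},y_{1})+c(x_{2},y_{2})$ whenever $x_{1}<x_{2}$ and $y_{1}<y_{2}$, which holds since $z\mapsto|z|^{p}$ is strictly convex for $p>1$---forces $\hat{\pi}$ to be concentrated on a monotone (comonotonic) subset of $\R^{2}$; by Theorem~\ref{Equivalent conditions comonotonicity} such a coupling has joint distribution function $M(F(x),G(y))$, so the comonotonic coupling is optimal. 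Here the work is shifted to a measurable-selection / cyclical-monotonicity argument establishing that a non-comonotonic coupling can always be strictly improved.
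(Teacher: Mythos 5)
The paper offers no internal proof of this statement at all---it is quoted as Dall'Aglio's Theorem~X and used as a black box in the proof of Theorem~\ref{main theorem} for $p>1$---so there is nothing to compare your argument to except the literature. Judged on its own, your main route is the standard and essentially correct way to prove the result: the Hoeffding-type identity expressing $I(\pi)-I(M_{F,G})$ as the integral of $\partial^2 c/\partial x\,\partial y$ against $\pi-M_{F,G}$ is exactly the Cambanis--Simons--Stout/Tchen mechanism; the sign analysis ($\partial^2 c/\partial x\,\partial y=-p(p-1)|x-y|^{p-2}\le 0$ combined with the Fr\'echet--Hoeffding upper bound $\pi\le M_{F,G}$) is right; and you correctly locate the real analytic work in the diagonal singularity for $1<p<2$ (harmless, since $|x-y|^{p-2}$ is locally integrable on $\R^2$ precisely when $p>1$) and in the vanishing of the boundary terms, which is what the tail hypotheses are for. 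What your proof buys over the paper's citation is a self-contained argument that also makes transparent \emph{why} the comonotone coupling wins: submodularity of the cost plus maximality of $M$ among copulas.

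Two flags. First, the parenthetical claim that the tail hypotheses alone guarantee $\int_{\R}\int_{\R}c\,dM_{F,G}<\infty$ is not accurate: $x^p[1-F(x)]\to 0$ is strictly weaker than a finite $p$-th moment (take $1-F(x)\sim (x^p\log x)^{-1}$), so finiteness of $I(M_{F,G})$ requires either the moment assumption the surrounding paper actually imposes ($\mu,\nu\in P_p(\R)$) or a separate argument; the minimization claim itself survives, but your ``nothing to prove when $I(\pi)=+\infty$'' step needs this point repaired. Second, in your alternative probabilistic route the key inequality is written backwards: for $x_1<x_2$ and $y_1<y_2$, convexity of $z\mapsto|z|^p$ with $p>1$ gives
\[
c(x_1,y_1)+c(x_2,y_2)\le c(x_1,y_2)+c(x_2,y_1),
\]
i.e.\ the \emph{monotone} matching is the cheaper one; as displayed, your strict inequality would force the optimal plan onto an antitone set. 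The intended conclusion (the support of an optimal plan is monotone, hence the coupling is comonotone and equals $M(F(x),G(y))$ by Theorem~\ref{Equivalent conditions comonotonicity}) is correct once the inequality is flipped.
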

Fix $\mu,\nu$ with associated distribution functions $F$ and $G$. By assuming $\eqref{first assymption DALL’AGLIO }$ and $\eqref{second assymption DALL’AGLIO }$ on the tails of the margins $F$ and $G$, Dall'Aglio came to prove that the double integral $I(\pi)$ above can be minimized. However, $I(\pi)$ is the expression to be minimized in the Wasserstein distance, i.e.,
\begin{displaymath}
 (W_p(\mu,\nu))^{p}=\inf_{\pi}\int_{\R}\int_{\R}|y-x|^pd\pi(x,y)
\end{displaymath}
where the infimum runs over all bivariate distribution functions $\pi$ with margins $F$ and $G$. It is sufficient for $\eqref{first assymption DALL’AGLIO }$ and $\eqref{second assymption DALL’AGLIO }$ to hold to have moment assumptions of the corresponding order $p$ (see \cite[Theorem 6.5.2]{Giorgio2003}). Since we are working  on the Wasserstein space of order $p\in [1,\infty)$ the existence of finite moments of order $p$ is guaranteed.
\begin{proof} [Proof of Theorem \ref{main theorem}, $p>1$]
We may, as before, recall from Theorem \ref{optimal transport cost} the existence of an optimal coupling $\hat{\pi}\in\Pi(\mu,\nu)$. However, using Theorem \ref{tails} we have that $(W_p(\mu,\nu))^p$ is minimized by the copula $M$ and hence
\begin{displaymath}
 (W_p(\mu,\nu))^{p}=\int_{\R}\int_{\R}|y-x|^pdM(F(x),G(y)). 
\end{displaymath}
\end{proof}
The above result Theorem \ref{main theorem}, in which the Wasserstein distance is expressed in terms of the copula $M$, allows to obtain again the known  result in terms of the generalized inverse functions, see e.g. \cite[Theorem 2.18 and Remark 2.19]{villani_topics}.

\begin{Corollary} \label{CorVallender}
Let $\mu, \nu$ be two probability measures in $P_{p}(\R)$. 
Let $F$ and $G$ be the associated distribution functions. Then for all $p\in [1,\infty)$,
\begin{equation}\label{equationcor}
    (W_{p}(\mu,\nu))^{p}=\int_{[0,1]} |F^{-1}(u)-G^{-1}(u)|^{p} du, 
\end{equation}
where 
$F^{-1}$ and $G^{-1}$ are the generalized inverses (or the quantile function of $F$ and $G$) associated to $F$ and $G$ respectively. 
\end{Corollary}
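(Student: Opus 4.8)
The plan is to derive Corollary \ref{CorVallender} directly from Theorem \ref{main theorem} together with Corollary \ref{comonotonicity result}. Theorem \ref{main theorem} tells us that
\begin{displaymath}
 (W_p(\mu,\nu))^p = \int_\R\int_\R |x-y|^p\, dM(F(x),G(y)),
\end{displaymath}
that is, the optimal coupling $\hat\pi$ has joint distribution function $M(F(\cdot),G(\cdot))$. By Theorem \ref{Equivalent conditions comonotonicity}, a couple $(\tilde X,\tilde Y)$ with this joint distribution function is exactly a comonotonic random vector with marginals $\mu$ and $\nu$, so $(\tilde X,\tilde Y)\overset{d}{=}(F^{-1}(U),G^{-1}(U))$ for $U\sim\cU(0,1)$.

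First I would set $g(x,y) = |x-y|^p$ and observe that, since $\mu,\nu\in P_p(\R)$, the function $g$ is integrable with respect to the comonotonic coupling (this is precisely $(W_p(\mu,\nu))^p<\infty$, which holds on the Wasserstein space of order $p$). Then I would apply Corollary \ref{comonotonicity result} to this $g$ and the comonotonic vector $(\tilde X,\tilde Y)$, obtaining
\begin{displaymath}
 \E\big[|\tilde X-\tilde Y|^p\big] = \int_{[0,1]} |F^{-1}(u)-G^{-1}(u)|^p\, du.
\end{displaymath}
Combining this with the identification $\E\big[|\tilde X-\tilde Y|^p\big] = \int_\R\int_\R |x-y|^p\, dM(F(x),G(y)) = (W_p(\mu,\nu))^p$ from Theorem \ref{main theorem} yields \eqref{equationcor}.

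There is essentially no substantive obstacle here — the corollary is a two-line consequence once both prior results are in hand. The only point requiring a small amount of care is the interchange of viewpoints between the Lebesgue--Stieltjes integral $\int\int |x-y|^p\,dM(F(x),G(y))$ appearing in Theorem \ref{main theorem} and the expectation $\E[g(\tilde X,\tilde Y)]$ over a concrete comonotonic pair: one must note that $M(F(\cdot),G(\cdot))$ is the joint distribution function of $(\tilde X,\tilde Y)$ (by Sklar's Theorem \ref{Sklar's theorem} and Theorem \ref{Equivalent conditions comonotonicity}), so the Stieltjes measure $dM(F(x),G(y))$ is the law of $(\tilde X,\tilde Y)$ and the two quantities coincide by the change-of-variables/transfer formula. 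After that, Corollary \ref{comonotonicity result} does all the remaining work, and no tail or moment hypotheses beyond $\mu,\nu\in P_p(\R)$ are needed.
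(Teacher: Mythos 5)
Your proposal is correct and follows exactly the paper's own route: identify the integral against $dM(F(x),G(y))$ from Theorem \ref{main theorem} as $\E[|\tilde X-\tilde Y|^p]$ for a comonotonic vector $(\tilde X,\tilde Y)$, then apply Corollary \ref{comonotonicity result} with $g(x,y)=|x-y|^p$. The extra care you devote to the transfer between the Lebesgue--Stieltjes integral and the expectation is a sound elaboration of a step the paper leaves implicit.
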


\begin{proof}
Let $(\tilde{X},\tilde{Y})$ be a comonotonic random vector with distributions $\mu$ and $\nu$, respectively. From Theorem \ref{main theorem} and Corollary \ref{comonotonicity result} follows,
\begin{align*}
    (W_p(\mu,\nu))^{p}= \E \big[|\tilde{X}-\tilde{Y}|^p \big]&=\int_{[0,1]} |F^{-1}(u)-G^{-1}(u)|^{p} du. 
\end{align*}
\end{proof}

\begin{Remark}
    As Vallander also noticed in \cite{vallander_ru}, for $p=1$, the Wasserstein distance on $\R$ may even be expressed in terms of the distribution functions $F$ and $G$:
     \begin{displaymath}
      W_1(\mu,\nu)=\int_\R|F(x)-G(x)|dx.
     \end{displaymath} 
\end{Remark}
Our aim, now, is to generalize the result obtained in Theorem \ref{main theorem} to the $d$-dimensional case, i.e. writing the expression of the Wasserstein distance for $d$-dimensional distribution using the comonotonicity copula. 
However, to achieve this, we require that  both probability measures share the same dependence structure, i.e., the same copula.
Namely:
\begin{Definition} \label{share same copula}
 Let $\mu,\nu\in P(\R^d)$ be two probability measures and $(F_1,\dotsc,F_d)$ and $(G_1,\dotsc,G_d)$ their one-dimensional marginal distribution functions. We say that $\mu$ and $\nu$ share the same copula if there exists a $d$-dimensional copula $C\colon[0,1]^d\to[0,1]$ such that the joint distribution functions $H_\mu$ and $H_\nu$ of $\mu$ and $\nu$ can be written as follows:
\begin{equation}\label{eq:shared_copula1}
\begin{split}
 H_\mu(x_1,\dotsc,x_d)&=C(F_1(x_1),\dotsc,F_d(x_d))
 \\
 H_\nu(y_1,\dotsc,y_d)&=C(G_1(y_1),\dotsc,G_d(y_d)).
\end{split}
\end{equation}
Similarly, we say that $X\sim\mu$ and $Y\sim\nu$ share a copula $C$ if their distributions $\mu,\nu$ do so. 
\end{Definition}  For such measures with \eqref{eq:shared_copula1}, it has been shown 
\begin{Theorem}[{\cite[Theorem 5]{BDS}, \cite[Theorem 2.9]{MR1240428}}]\label{thm:bds}
Let $X,Y$ be random variables on $\R^d$. Then the following are equivalent:
\begin{enumerate}
    \item $X$ and $Y$ share the same copula $C$.
    \item The Wasserstein distance between $X$ and $Y$ is given by
    \begin{displaymath}
     (W_p(X,Y))^{p}=\sum_{i=1}^d (W_p(X_i,Y_i))^{p}
    \end{displaymath}
\end{enumerate}
\end{Theorem}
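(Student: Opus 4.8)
The plan is to prove the two implications separately, with the direction ``shared copula $\Rightarrow$ additive decomposition'' being the conceptually central one and the one that directly uses the machinery already developed in the excerpt.

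For the implication $(1)\Rightarrow(2)$, suppose $X\sim\mu$ and $Y\sim\nu$ share the copula $C$, with one-dimensional marginal distribution functions $(F_1,\dotsc,F_d)$ and $(G_1,\dotsc,G_d)$ respectively. The key observation is that a shared copula allows us to realize $X$ and $Y$ on a common probability space in a \emph{coordinatewise comonotonic} way: take $U=(U_1,\dotsc,U_d)$ a random vector on $[0,1]^d$ whose joint distribution function is $C$ (so each $U_i\sim\cU(0,1)$), and set $\tilde X=(F_1^{-1}(U_1),\dotsc,F_d^{-1}(U_d))$ and $\tilde Y=(G_1^{-1}(U_1),\dotsc,G_d^{-1}(U_d))$. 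By Sklar's Theorem \ref{Sklar's theorem} together with \eqref{eq:shared_copula1}, $\tilde X\sim\mu$ and $\tilde Y\sim\nu$, so the joint law $\tilde\pi$ of $(\tilde X,\tilde Y)$ is an admissible coupling in $\Pi(\mu,\nu)$. This gives the upper bound
\begin{displaymath}
 (W_p(X,Y))^{p}\le\E\big[\|\tilde X-\tilde Y\|_p^{p}\big]=\E\Big[\sum_{i=1}^d|F_i^{-1}(U_i)-G_i^{-1}(U_i)|^p\Big]=\sum_{i=1}^d\E\big[|F_i^{-1}(U_i)-G_i^{-1}(U_i)|^p\big],
\end{displaymath}
and since each pair $(F_i^{-1}(U_i),G_i^{-1}(U_i))$ is comonotonic with marginals $\mu_i,\nu_i$, Corollary \ref{comonotonicity result} (or Theorem \ref{main theorem} and Corollary \ref{CorVallender}) identifies each summand as $(W_p(X_i,Y_i))^p$. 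For the matching lower bound, note that for \emph{any} coupling $\pi\in\Pi(\mu,\nu)$, realized by a pair $(X',Y')$, the $i$-th coordinates $(X_i',Y_i')$ form a coupling of $\mu_i$ and $\nu_i$, so $\E[|X_i'-Y_i'|^p]\ge(W_p(X_i,Y_i))^p$; summing over $i$ using $\|x-y\|_p^p=\sum_i|x_i-y_i|^p$ yields $\E[\|X'-Y'\|_p^p]\ge\sum_{i=1}^d(W_p(X_i,Y_i))^p$, and taking the infimum over $\pi$ gives $(W_p(X,Y))^p\ge\sum_{i=1}^d(W_p(X_i,Y_i))^p$. The two bounds together prove $(2)$.

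For the converse $(2)\Rightarrow(1)$, the argument runs through the chain of (in)equalities above: the lower bound $(W_p(X,Y))^p\ge\sum_i(W_p(X_i,Y_i))^p$ holds unconditionally, so if equality holds then the optimal coupling $\hat\pi$ from Theorem \ref{Existence of optimal coupling} must achieve equality in every coordinate, i.e. $\E[|X_i-Y_i|^p]=(W_p(X_i,Y_i))^p$ for each $i$ under $\hat\pi$. Hence each coordinate pair $(X_i,Y_i)$ is an optimal one-dimensional coupling, which by the one-dimensional theory (Theorem \ref{main theorem}/\ref{Equivalent conditions comonotonicity}) forces $(X_i,Y_i)\overset{d}{=}(F_i^{-1}(U),G_i^{-1}(U))$ for a uniform $U$ — and crucially, inspecting the proof, the \emph{same} $U$ can be taken to be $F_i(X_i)$ (appropriately randomized where $F_i$ has flat pieces) simultaneously for all $i$. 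One then argues that $X$ is a deterministic (coordinatewise) increasing transformation of the single uniform variable $U=F_1(X_1)$, and likewise $Y$, from which the joint distribution of $X$ and the joint distribution of $Y$ are both determined by the law of $U$ through $U\mapsto(F_i^{-1}(U_i))_i$ — forcing $H_\mu$ and $H_\nu$ to be built from one common copula $C$, namely the copula of $X$ (equivalently of $Y$).

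The main obstacle is the bookkeeping in the $(2)\Rightarrow(1)$ direction: extracting a \emph{single} coupling variable $U$ that simultaneously synchronizes all $d$ coordinates of both $X$ and $Y$, rather than $d$ separate uniforms $U_1,\dotsc,U_d$. This requires care when the marginal distribution functions are not strictly increasing (so $F_i$ is not invertible and the ``natural'' uniformizing map must be a measurable selection / randomization), and one must verify that the comonotonicity forced in each coordinate pins down the \emph{joint} dependence consistently. I expect to handle this by invoking Theorem \ref{Equivalent conditions comonotonicity} to write $(X_i,Y_i)\overset{d}{=}(F_i^{-1}(U),G_i^{-1}(U))$, then using that $U$ can be recovered $\P$-a.s. as a function of $X_i$ (hence the same across $i$ up to a.s. equality), citing \cite{BDS,MR1240428} for the technical measurable-selection step; the converse is in fact largely a matter of carefully citing and assembling these references, since it is stated there.
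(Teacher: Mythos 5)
The paper itself offers no proof of this theorem: it is imported verbatim from \cite{BDS} and \cite{MR1240428}, so there is no in-paper argument to compare against. Your direction $(1)\Rightarrow(2)$ is correct and is the standard argument: the coupling $\bigl((F_i^{-1}(U_i))_i,(G_i^{-1}(U_i))_i\bigr)$ with $U\sim C$ has the right marginals by Sklar's theorem and gives the upper bound, while the coordinatewise bound $\E[|X_i'-Y_i'|^p]\ge (W_p(X_i,Y_i))^p$ for an arbitrary coupling, summed via $\|x-y\|_p^p=\sum_i|x_i-y_i|^p$, gives the matching lower bound. That half stands on its own.

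The converse $(2)\Rightarrow(1)$ contains a genuine gap. From equality you correctly deduce that under an optimal coupling $\hat\pi$ each coordinate pair $(X_i,Y_i)$ is an optimal coupling of $(\mu_i,\nu_i)$. But the next step is wrong as written: you assert that ``$X$ is a deterministic (coordinatewise) increasing transformation of the single uniform variable $U=F_1(X_1)$.'' That would force all coordinates of $X$ to be comonotonic with one another, i.e.\ $X$ would have copula $M^d$; nothing in the hypothesis implies this, and in general $F_1(X_1),\dotsc,F_d(X_d)$ are $d$ distinct, dependent uniforms whose joint law \emph{is} the copula of $X$. The synchronization that comonotonicity buys you is between $X_i$ and $Y_i$ for each fixed $i$, not across different $i$. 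The correct route is: comonotonicity of each pair $(X_i,Y_i)$ makes $Y_i$ ($\hat\pi$-a.s.) a nondecreasing function of $X_i$ (e.g.\ $Y_i=G_i^{-1}(F_i(X_i))$ when $F_i$ is continuous), and a coordinatewise nondecreasing transformation of a random vector preserves its copula, whence $Y$ shares a copula with $X$. Two further points are not addressed: (i) for $p=1$ the cost is not strictly convex, so optimality of each coordinate coupling does not by itself force comonotonicity (the one-dimensional optimal coupling is not unique), and (ii) when some $F_i$ has atoms or flat pieces, $Y_i$ need not be a function of $X_i$ and the copula-preservation step genuinely requires the randomization/measurable-selection arguments you defer to the references. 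As it stands, the converse is an appeal to \cite{BDS,MR1240428} rather than a proof.
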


As a consequence, through the previous result Theorem \ref{thm:bds} and our formulation in Theorem \ref{main theorem} of the Wasserstein distance in terms of $M$ we obtain the following statement.

\begin{Theorem}\label{thm:to_be_deleted}
Let $\mu,\nu$ be two probability measures in $P_p(\R^d)$ sharing the same copula. Denote by $F_i$ and $G_i$ ($i=1,\dotsc,d$) the distribution functions of the one-dimensional margins of $\mu$ and $\nu$, respectively. Then for all $p\in [1,\infty)$, 
\begin{equation}\label{equationd}
    ( W_{p}(\mu,\nu))^{p}=\sum_{i=1}^{d} \int_{\R} \int_{\R} |x_{i}-y_{i}|^{p} dM(F_{i}(x_{i}),G_{i}(y_{i}))
\end{equation}
where $M$ is the copula defined by \eqref{Comonotinicity copula}.
\end{Theorem}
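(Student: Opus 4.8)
The plan is to combine the two ingredients the paper has just set up: Theorem \ref{thm:bds}, which says that under the shared-copula hypothesis the $p$-Wasserstein distance decomposes coordinate-wise as $(W_p(\mu,\nu))^p = \sum_{i=1}^d (W_p(\mu_i,\nu_i))^p$, where $\mu_i,\nu_i$ are the $i$-th one-dimensional marginals; and Theorem \ref{main theorem}, which rewrites each one-dimensional Wasserstein distance in terms of the comonotonicity copula $M$. So the proof is essentially a two-line composition once the hypotheses are checked.

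Concretely, first I would invoke Theorem \ref{thm:bds}. Its statement is phrased for random variables $X\sim\mu$ and $Y\sim\nu$ on $\R^d$; since $\mu$ and $\nu$ share the same copula by assumption, the implication $(1)\Rightarrow(2)$ gives
\begin{equation*}
 (W_p(\mu,\nu))^p = \sum_{i=1}^d (W_p(\mu_i,\nu_i))^p,
\end{equation*}
where $\mu_i,\nu_i$ denote the $i$-th marginals, which have distribution functions $F_i,G_i$. One should first remark that $\mu_i\in P_p(\R)$ and $\nu_i\in P_p(\R)$: this follows because $\mu\in P_p(\R^d)$ and on $\R^d$ the $i$-th coordinate projection is $1$-Lipschitz with respect to the $p$-norm, so a finite $p$-th moment of $\mu$ forces a finite $p$-th moment of each marginal — this is the one small verification needed before Theorem \ref{main theorem} can be applied to each coordinate.

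Then, for each fixed $i\in\{1,\dotsc,d\}$, I would apply Theorem \ref{main theorem} to the pair $\mu_i,\nu_i\in P_p(\R)$ to obtain
\begin{equation*}
 (W_p(\mu_i,\nu_i))^p = \int_\R\int_\R |x_i-y_i|^p\, dM(F_i(x_i),G_i(y_i)).
\end{equation*}
Substituting this into the coordinate-wise sum yields exactly \eqref{equationd}. That completes the proof.

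I do not expect any genuine obstacle here — this theorem is explicitly advertised in the text as ``a consequence'' of Theorem \ref{thm:bds} together with Theorem \ref{main theorem}. The only points requiring a word of care are: (i) matching the measure-theoretic formulation of Theorem \ref{main theorem} (which is stated for probability measures) with the random-variable formulation of Theorem \ref{thm:bds} (stated for $X,Y$), which is harmless since the paper has already declared $W_p(X,Y)=W_p(\mu,\nu)$; and (ii) the marginal-moment check mentioned above so that the hypotheses of Theorem \ref{main theorem} are legitimately in force for every $i$. Everything else is direct substitution.
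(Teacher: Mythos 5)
Your proposal is correct and follows exactly the paper's own argument: invoke Theorem \ref{thm:bds} for the coordinate-wise decomposition under the shared-copula hypothesis, then apply Theorem \ref{main theorem} to each pair of one-dimensional marginals. The extra remark that each marginal lies in $P_p(\R)$ is a sensible (if routine) check that the paper leaves implicit.
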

\begin{proof}
Denoting by $\mu_i$ and $\nu_i$ the one-dimensional margins of $i$-th coordinate of the two probability measures $\mu$ and $\nu$. respectively. Since $\mu$ and $\nu$ share the same copula, applying Theorems \ref{thm:bds} and \ref{main theorem}, we have
\begin{align*}
     (W_{p}(\mu,\nu))^{p}&=\sum_{i=1}^{d} (W_{p}(\mu_{i},\nu_{i}))^{p}
     \\ 
    &= \sum_{i=1}^d\int_\R\int_\R|x_i-y_i|^pdM(F_i(x_i),G_i(y_i)).
\end{align*}
\end{proof}
Additionally, the previous result and the Corollary \ref{CorVallender} allows us to directly express the Wasserstein distance through the generalized inverse functions.
\begin{Corollary}\label{The Wasserstain distance in terms of the generalized inverses}
Let $\mu,\nu$ be two probability measures in $P_p(\R^d)$ sharing the same copula. Denote by $F_i$ and $G_i$ ($i=1,\dotsc,d$) the distribution functions of the one-dimensional margins of $\mu$ and $\nu$, respectively. Then for all $p\in [1,\infty)$,
\begin{equation}\label{wassersteindistanceddimen}
     (W_{p}(\mu,\nu))^{p}= \int_{[0,1]} \left \|F^{-1}(u)-G^{-1}(u)\right \|_p^{p} du
\end{equation}
\begin{equation*}
    =\sum_{i=1}^d\int_{[0,1]}|F_i^{-1}(u)-G_i^{-1}(u)|^pdu,
\end{equation*}
where $M$ is the copula defined by \eqref{Comonotinicity copula}, $F^{-1}, G^{-1}:[0,1] \rightarrow \R^{d}$ are the generalized inverses associated to $F$ and $G$ respectively, where for all $u \in [0,1]$
\begin{align*}
    F^{-1}(u)&:= \left( F_{1}^{-1}(u),\dotsc,F_{d}^{-1}(u) \right) \\ G^{-1}(u)&:=\left( G_{1}^{-1}(u),\dotsc,G_{d}^{-1}(u) \right)
\end{align*}
and $F_{i}^{-1}, G_{i}^{-1}, i=1,\dotsc,d$, are one-dimensional generalized inverses. 
\end{Corollary}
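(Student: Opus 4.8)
The plan is to chain together the two results already established above: Theorem~\ref{thm:to_be_deleted}, which decomposes $(W_p(\mu,\nu))^p$ as a sum over coordinates of one-dimensional comonotonicity-copula integrals, and Corollary~\ref{CorVallender}, which rewrites each such one-dimensional integral as an integral over $[0,1]$ of the $p$-th power of the difference of the corresponding generalized inverses.

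First I would invoke Theorem~\ref{thm:to_be_deleted}: since $\mu$ and $\nu$ share the same copula, $(W_p(\mu,\nu))^p=\sum_{i=1}^d\int_\R\int_\R|x_i-y_i|^p\,dM(F_i(x_i),G_i(y_i))$. For each fixed $i$, the inner double integral is exactly the expression appearing in Theorem~\ref{main theorem} applied to the one-dimensional marginals $\mu_i$ and $\nu_i$; these lie in $P_p(\R)$ because $\mu,\nu\in P_p(\R^d)$, so each term is finite and Corollary~\ref{CorVallender} applies, giving $\int_\R\int_\R|x_i-y_i|^p\,dM(F_i(x_i),G_i(y_i))=\int_{[0,1]}|F_i^{-1}(u)-G_i^{-1}(u)|^p\,du$. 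Summing over $i$ produces the second displayed identity in the statement.

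It then remains to identify this sum with the single integral in \eqref{wassersteindistanceddimen}. By the definition of the vector-valued generalized inverses $F^{-1}(u)=(F_1^{-1}(u),\dotsc,F_d^{-1}(u))$ and $G^{-1}(u)=(G_1^{-1}(u),\dotsc,G_d^{-1}(u))$, and of the $p$-norm on $\R^d$, one has pointwise in $u\in[0,1]$ that $\|F^{-1}(u)-G^{-1}(u)\|_p^p=\sum_{i=1}^d|F_i^{-1}(u)-G_i^{-1}(u)|^p$. Since the summands are nonnegative and measurable in $u$, Tonelli's theorem permits interchanging the finite sum with the integral over $[0,1]$, which yields $\sum_{i=1}^d\int_{[0,1]}|F_i^{-1}(u)-G_i^{-1}(u)|^p\,du=\int_{[0,1]}\|F^{-1}(u)-G^{-1}(u)\|_p^p\,du$, as claimed.

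There is essentially no hard step here: the only two points deserving a word of care are that the one-dimensional marginals inherit finite $p$-th moments (so each term is finite and Corollary~\ref{CorVallender} is legitimately applicable) and the routine measurability needed to swap the finite sum and the integral; all the genuine work has already been carried out in Theorem~\ref{thm:to_be_deleted} and Corollary~\ref{CorVallender}.
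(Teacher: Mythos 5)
Your proposal is correct and follows essentially the same route as the paper: apply Theorem~\ref{thm:to_be_deleted} to decompose $(W_p(\mu,\nu))^p$ coordinatewise, then rewrite each one-dimensional term via Corollary~\ref{CorVallender}, and finally recombine the sum into the $p$-norm integral. The extra remarks on the marginals inheriting finite $p$-th moments and on swapping the finite sum with the integral are routine but harmless additions to what the paper leaves implicit.
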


\begin{proof} By \eqref{equationd} and \eqref{equationcor}, we get
\begin{align*}
     (W_{p}(\mu,\nu))^{p}&=\sum_{i=1}^d\int_\R\int_\R|x_i-y_i|^pdM(F_i(x_i),G_i(y_i))
     \\ 
    &= \sum_{i=1}^d\int_{[0,1]}|F_i^{-1}(u)-G_i^{-1}(u)|^pdu.
\end{align*}
\end{proof}

\begin{Remark}\label{RemarkNecessity}
In Theorem \ref{thm:to_be_deleted}, the condition that the two probability measures share the same copula is not only sufficient but necessary. In fact, let $\mu$, $\nu$ two different probability measures on $\R^d$ associate a two copulas $C_1$, $C_2$, and with the same $d$-univariate distribution functions $F_1,\dots,F_d$. Let us assume ad absurdum that the result holds also if $C_1$ and $C_2$ are different, then
\begin{align*}
    (W_{p}(\mu,\nu))^{p}&=\sum_{i=1}^{d} \int_{\R} \int_{\R} |x_{i}-y_{i}|^{p} dM(F_{i}(x_{i}),F_{i}(y_{i}))
    \\ 
    &=(W_{p}(\mu,\mu))^{p}=0.
\end{align*}
\end{Remark}

The result in Corollary \ref{The Wasserstain distance in terms of the generalized inverses} was already obtained in Alfonsi {\cite[Poposition 1.1]{Alfonsi}} but in his proof does not use our result of Theorem \ref{thm:bds}.

Moreover, we are going to use the Theorem \ref{thm:to_be_deleted} to bound the Wasserstein distance $W_{p,q}$ defined in \eqref{generaldefwasserstein} for $(\R^d,\|\cdot\|_q)$.
\begin{Proposition}\label{bound}
     Let $\mu,\nu$ be two probability measures in $P_p(\R^d)$ and $q\geq 1$ with $q\neq p$ sharing the same copula. Denote by $F_i$ and $G_i$ ($i=1,\dotsc,d$) the distribution functions of the one-dimensional margins of $\mu$ and $\nu$, respectively. Then the following holds:
\begin{enumerate}
     \item If $1\leq q\leq p <\infty$, we have
     \begin{displaymath}
       \sum_{i=1}^{d} \int_{\R} \int_{\R} |x_{i}-y_{i}|^{p} dM(F_{i}(x_{i}),G_{i}(y_{i}))\leq (W_{p,q}(\mu,\nu))^{p}\leq d^{\frac{p}{q}-1}\sum_{i=1}^{d} \int_{\R} \int_{\R} |x_{i}-y_{i}|^{p} dM(F_{i}(x_{i}),G_{i}(y_{i})).
     \end{displaymath}
     \item If $1\leq p\leq q <\infty$, we have
     \begin{displaymath}
       d^{\frac{p}{q}-1}\sum_{i=1}^{d} \int_{\R} \int_{\R} |x_{i}-y_{i}|^{p} dM(F_{i}(x_{i}),G_{i}(y_{i}))\leq (W_{p,q}(\mu,\nu))^{p}\leq \sum_{i=1}^{d} \int_{\R} \int_{\R} |x_{i}-y_{i}|^{p} dM(F_{i}(x_{i}),G_{i}(y_{i})).
     \end{displaymath}
 \end{enumerate}
where $M$ is the copula defined by \eqref{Comonotinicity copula}.
 \end{Proposition}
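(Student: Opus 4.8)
The plan is to reduce the statement to two ingredients that are already available: the sharp comparison constants between the $q$-norm and the $p$-norm on $\R^d$, and the representation of $W_p=W_{p,p}$ furnished by Theorem \ref{thm:to_be_deleted}. It is worth noting in advance that the shared-copula hypothesis will enter only at the final step; the two-sided comparison between $W_{p,q}$ and $W_p$ itself holds for arbitrary $\mu,\nu\in P_p(\R^d)$ (all norms on $\R^d$ being equivalent, these measures also have finite $p$-th $q$-norm moments, so $W_{p,q}$ is well defined).

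First I would record the pointwise norm inequalities on $\R^d$. For every $z\in\R^d$ the map $r\mapsto\|z\|_r$ is non-increasing, and by the power-mean (Hölder) inequality $\|z\|_r\le d^{1/r-1/s}\|z\|_s$ whenever $r\le s$. Applying this with $\{r,s\}=\{p,q\}$ and raising to the power $p$ gives, in the regime $1\le q\le p<\infty$,
\[
 \|z\|_p^p\le\|z\|_q^p\le d^{\frac pq-1}\|z\|_p^p ,
\]
and, in the regime $1\le p\le q<\infty$,
\[
 d^{\frac pq-1}\|z\|_p^p\le\|z\|_q^p\le\|z\|_p^p .
\]

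Next I would pass from these pointwise bounds to the Wasserstein quantities. Integrating the displays above with $z=x-y$ against an arbitrary $\pi\in\Pi(\mu,\nu)$, recalling $(W_{p,q}(\mu,\nu))^p=\inf_\pi\int\|x-y\|_q^p\,d\pi$ and $(W_p(\mu,\nu))^p=\inf_\pi\int\|x-y\|_p^p\,d\pi$, and using that an inequality valid for every $\pi$ with a fixed positive constant survives the passage to the infimum over $\pi$, I obtain in the case $1\le q\le p$
\[
 (W_p(\mu,\nu))^p\le (W_{p,q}(\mu,\nu))^p\le d^{\frac pq-1}(W_p(\mu,\nu))^p ,
\]
and the analogous chain with the constants interchanged when $1\le p\le q$. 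If one prefers an explicit competitor to the monotonicity-of-infimum argument for the upper bound, one can instead feed the $W_p$-optimal coupling $\hat\pi$ provided by Theorem \ref{Existence of optimal coupling} (it exists since $(\R^d,\|\cdot\|_p)$ is Polish) into the middle integral.

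Finally, since $\mu$ and $\nu$ share the same copula, Theorem \ref{thm:to_be_deleted} gives
\[
 (W_p(\mu,\nu))^p=\sum_{i=1}^d\int_\R\int_\R|x_i-y_i|^p\,dM(F_i(x_i),G_i(y_i)),
\]
and substituting this into the two-sided bounds of the previous step yields precisely assertions (1) and (2). I do not anticipate a genuine obstacle here: the only delicate points are bookkeeping — attaching the exponent $d^{p/q-1}$ to the correct side in each of the regimes $q\le p$ and $p\le q$, and confirming the direction of every inequality after raising to the power $p$ and after taking the infimum.
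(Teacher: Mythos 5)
Your proposal is correct and follows essentially the same route as the paper: exploit the equivalence $\|z\|_p\le\|z\|_q\le d^{1/q-1/p}\|z\|_p$ (for $q\le p$, and its mirror for $p\le q$), pass the resulting pointwise bounds through the infimum defining $W_{p,q}$ to sandwich $(W_{p,q})^p$ between $(W_p)^p$ and $d^{p/q-1}(W_p)^p$, and then invoke Theorem \ref{thm:to_be_deleted} to rewrite $(W_p)^p$ via the comonotonicity copula. Your write-up is merely more explicit than the paper's about the pointwise norm comparison and the monotonicity-of-infimum step.
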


 \begin{proof}
   Since all the norms are equivalent, then there exists $c_{1}, c_{2}>0$ constants such that for all $x,y \in \R^{d}$
    \begin{displaymath}
        c_{1}\|x-y\|_{p}\leq \|x-y\|_{q} \leq c_{2} \|x-y\|_{p}. 
    \end{displaymath}
    In fact, if $q\leq p$, we may take $c_{1}=1$ and $c_{2}=d^{\frac{1}{q}-\frac{1}{p}}$. We then get
    \begin{displaymath}
      (W_{p}(\mu,\nu))^{p} \leq  (W_{p,q}(\mu,\nu) )^{p}\leq d^{\frac{p}{q}-1} (W_{p}(\mu,\nu))^{p}
    \end{displaymath}
    A similar calculation shows the case $p\leq q$. Thus, from Theorem \ref{thm:to_be_deleted}  we get the desired result.
   \end{proof}

   As a Corollary, we get
\begin{Corollary} \label{CorhalfAlfonsi}
    Let $\mu,\nu$ be two probability measures in $P_p(\R^d)$ and $q\geq 1$ with $q\neq p$ sharing the same copula. Denote by $F_i$ and $G_i$ ($i=1,\dotsc,d$) the distribution functions of the one-dimensional margins of $\mu$ and $\nu$, respectively. Denote by $F^{-1}, G^{-1}:[0,1] \rightarrow \R^{d}$ are the generalized inverses associated to $F$ and $G$ respectively, where for all $u \in [0,1]$
\begin{align*}
    F^{-1}(u)&:= \left( F_{1}^{-1}(u),\dotsc,F_{d}^{-1}(u) \right) \\ G^{-1}(u)&:=\left( G_{1}^{-1}(u),\dotsc,G_{d}^{-1}(u) \right)
\end{align*}
and $F_{i}^{-1}, G_{i}^{-1}, i=1,\dotsc,d$, are one-dimensional generalized inverses.  
 Then the following holds:
 \begin{enumerate}
     \item If $1\leq q\leq p <\infty$, we have
     \begin{displaymath}
       \int_{[0,1]}\|F^{-1}(u)-G^{-1}(u)\|_p^p du\leq (W_{p,q}(\mu,\nu))^{p}\leq d^{\frac{p}{q}-1}\int_{[0,1]}\|F^{-1}(u)-G^{-1}(u)\|_p^p du.
     \end{displaymath}
     \item If $1\leq p\leq q <\infty$, we have
     \begin{displaymath}
       d^{\frac{p}{q}-1}\int_{[0,1]}\|F^{-1}(u)-G^{-1}(u)\|_p^p du\leq (W_{p,q}(\mu,\nu))^{p}\leq \int_{[0,1]}\|F^{-1}(u)-G^{-1}(u)\|_p^p du.
     \end{displaymath}
 \end{enumerate} 
\end{Corollary}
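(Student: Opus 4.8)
The plan is to derive Corollary \ref{CorhalfAlfonsi} as an immediate consequence of Proposition \ref{bound} together with the identity \eqref{wassersteindistanceddimen} from Corollary \ref{The Wasserstain distance in terms of the generalized inverses}. The key observation is that both corollaries and the proposition operate under exactly the same hypotheses (two probability measures $\mu,\nu\in P_p(\R^d)$ sharing the same copula), so no additional assumptions need to be verified; we merely need to rewrite the bounds.

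First I would recall that under the shared-copula hypothesis, Corollary \ref{The Wasserstain distance in terms of the generalized inverses} gives
\begin{displaymath}
 \sum_{i=1}^{d}\int_{\R}\int_{\R}|x_i-y_i|^p\,dM(F_i(x_i),G_i(y_i))=\sum_{i=1}^d\int_{[0,1]}|F_i^{-1}(u)-G_i^{-1}(u)|^p\,du=\int_{[0,1]}\|F^{-1}(u)-G^{-1}(u)\|_p^p\,du,
\end{displaymath}
where the last equality is just the definition of the $p$-norm on $\R^d$ applied to the vector $F^{-1}(u)-G^{-1}(u)=(F_1^{-1}(u)-G_1^{-1}(u),\dotsc,F_d^{-1}(u)-G_d^{-1}(u))$ together with Tonelli's theorem to interchange the finite sum and the integral over $[0,1]$. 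Then I would substitute this common expression for the sum of double integrals into both chains of inequalities in Proposition \ref{bound}: case (1), $1\le q\le p<\infty$, yields the first displayed inequality, and case (2), $1\le p\le q<\infty$, yields the second, with the factor $d^{\frac{p}{q}-1}$ carried over verbatim.

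Since this is a purely formal substitution, there is essentially no obstacle: the only point requiring a word of care is making sure the rewriting of $\sum_i\int_{[0,1]}|F_i^{-1}(u)-G_i^{-1}(u)|^p\,du$ as $\int_{[0,1]}\|F^{-1}(u)-G^{-1}(u)\|_p^p\,du$ is legitimate, which it is because for each fixed $u$ the integrand is precisely $\|(F_i^{-1}(u)-G_i^{-1}(u))_{i=1}^d\|_p^p=\sum_{i=1}^d|F_i^{-1}(u)-G_i^{-1}(u)|^p$, and the interchange of the finite sum with the integral is unconditional. Hence the proof is a one-line invocation of Proposition \ref{bound} and Corollary \ref{The Wasserstain distance in terms of the generalized inverses}.

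\begin{proof}
By Corollary \ref{The Wasserstain distance in terms of the generalized inverses}, and since $\mu$ and $\nu$ share the same copula, for each fixed $u\in[0,1]$ we have, by definition of the $p$-norm on $\R^d$,
\begin{displaymath}
 \|F^{-1}(u)-G^{-1}(u)\|_p^p=\sum_{i=1}^d|F_i^{-1}(u)-G_i^{-1}(u)|^p,
\end{displaymath}
so that, interchanging the finite sum with the integral,
\begin{displaymath}
 \sum_{i=1}^{d}\int_{\R}\int_{\R}|x_i-y_i|^p\,dM(F_i(x_i),G_i(y_i))=\sum_{i=1}^d\int_{[0,1]}|F_i^{-1}(u)-G_i^{-1}(u)|^p\,du=\int_{[0,1]}\|F^{-1}(u)-G^{-1}(u)\|_p^p\,du.
\end{displaymath}
Substituting this identity into the two chains of inequalities of Proposition \ref{bound} yields: in the case $1\le q\le p<\infty$,
\begin{displaymath}
 \int_{[0,1]}\|F^{-1}(u)-G^{-1}(u)\|_p^p\,du\leq (W_{p,q}(\mu,\nu))^{p}\leq d^{\frac{p}{q}-1}\int_{[0,1]}\|F^{-1}(u)-G^{-1}(u)\|_p^p\,du,
\end{displaymath}
and in the case $1\le p\le q<\infty$,
\begin{displaymath}
 d^{\frac{p}{q}-1}\int_{[0,1]}\|F^{-1}(u)-G^{-1}(u)\|_p^p\,du\leq (W_{p,q}(\mu,\nu))^{p}\leq \int_{[0,1]}\|F^{-1}(u)-G^{-1}(u)\|_p^p\,du.
\end{displaymath}
This proves both assertions.
\end{proof}
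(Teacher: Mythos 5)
Your proof is correct and follows exactly the paper's own route: substitute the identity \eqref{wassersteindistanceddimen} from Corollary \ref{The Wasserstain distance in terms of the generalized inverses} into the two chains of inequalities of Proposition \ref{bound}. The only difference is that you spell out the (trivial) interchange of the finite sum and the integral, which the paper leaves implicit.
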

\begin{proof}
Let $q\le p$, then the statement directly follows by Proposition \ref{bound} and \eqref{wassersteindistanceddimen}. The proof is analogous for $p\le q$.
\end{proof}
{\bf Acknowledgments.} We thank Dennis Schroers (University Bonn) for many  useful discussions related to this article and to point us Remark \ref{RemarkNecessity}. We thank also Stefano Bonaccorsi (University Trento) for giving to our attention the related references \cite{DALL’AGLIO} at the beginning of this work and for many important comments. 

\end{document}